\documentclass[12pt,a4paper,reqno]{amsart}
\usepackage[english]{babel}
\usepackage[utf8]{inputenc}
\usepackage{mathtools}
\usepackage{amsmath}
\usepackage[hidelinks]{hyperref}
\usepackage{amssymb}
\usepackage{ifthen}
\usepackage{graphicx}
\usepackage[capitalise,noabbrev]{cleveref}
\usepackage{pgf}
\usepackage{enumerate}
\usepackage{float}
\usepackage{xpatch}
\usepackage{tikz-cd}
\usepackage[margin=2.5cm]{geometry}
\title{Gorenstein cotorsion pairs induced by balanced pairs}
\author{Konstantinos Golfis}
\date{}

\newcommand{\Z}{\mathbb{Z}}

\newcommand{\A}{\mathcal{A}}
\newcommand{\X}{\mathcal{X}}
\newcommand{\Y}{\mathcal{Y}}

\newtheorem{prop}{Proposition}[section]
\newtheorem{lem}{Lemma}[section]
\newtheorem{defin}{Definition}[section]
\newtheorem{remark}{Remark}[section]
\newtheorem{theor}{Theorem}[section]

\newtheorem{corol}{Corollary}[section]
\newtheorem{ex}{Example}[section]
\def\quotient#1#2{
	\raise1ex\hbox{$#1$}\Big/\lower1ex\hbox{$#2$}
}

\begin{document}

\begin{abstract}
Let $(\X,\Y)$ be an admissible balanced pair in a locally presentable
abelian category $\A$, and let $\mathcal E$ be its induced exact structure.
We study the class $G(\X)$ of cycles of two-sided complexes with terms in
$\X$ that are both right and left $\X$-acyclic. If $\X$ is closed under
direct summands and finite direct sums and is $\kappa$-Kaplansky for every
sufficiently large regular cardinal $\kappa$, we prove that
$(G(\X),G(\X)^{\perp *})$ is a hereditary cotorsion pair relative to
$\mathcal E$. For relative purity determined by finitely presented
modules, we give two criteria for functorial completeness and show that
relative periodic modules generate a functorially complete hereditary
cotorsion pair. We also compute the relative Gorenstein objects, their
right orthogonal, and explicit approximations for a family of exact
structures over Dedekind domains.
\end{abstract}

\maketitle

\section{Introduction}

The classical notion of Gorenstein projective modules dates back to Auslander and Bridger's work in 1969 on modules of G-dimension zero \cite{AuslanderBridger1969}. This concept was later extended to arbitrary modules by Enochs and Jenda in 1995 \cite{EnochsJenda1995} who formalized the concepts of Gorenstein projective and injective modules. Since then, these modules have become a central object of study, with further generalizations introduced by Sather-Wagstaff, Sharif and White \cite{https://doi.org/10.1112/jlms/jdm124}, in order to prove stability and closure properties.

To extend these concepts beyond standard projective and injective objects, it is natural to employ techniques from relative homological algebra. Balanced pairs, introduced by Chen in \cite{CHEN20102718}, provide a framework for this purpose. Let $(\mathcal{X}, \mathcal{Y})$ be a balanced pair in an abelian category $\mathcal{A}$. If the pair is admissible, it naturally endows $\mathcal{A}$ with an exact category structure in the sense of Quillen \cite{Quillen1973}. In this setting, the classical Yoneda Ext functor is replaced by the relative extension functor, denoted $\operatorname{Ext}_{\mathcal{X}}$.

A basic question is whether a Gorenstein class and its right orthogonal
form a hereditary cotorsion pair, and whether that pair is complete.
Cort\'{e}s-Izurdiaga and \v{S}aroch \cite{CortesIzurdiaga2025} proved the
cotorsion-pair assertion for ordinary Gorenstein projective modules over
arbitrary rings and obtained sufficient conditions for completeness.
Smallness conditions, including deconstructibility and Kaplansky
conditions, provide useful tools for such questions \cite{Stovicek2013}.
In an efficient exact category, a set of objects whose transfinite
extension closure contains a generator generates a functorially complete
cotorsion pair \cite[Theorem~5.16]{StovicekExactModels2014}.
Set-theoretic approximation methods also play an important role in the
proof of the flat cover conjecture \cite{Bicanetal2001}.

In Section~2, we recall the background on relative homological algebra,
balanced pairs, and exact categories. In Section~3, we study $G(\X)$,
the class of cycles of complexes with terms in $\X$ that are
both right and left $\X$-acyclic. Adapting the arguments of
\cite{CortesIzurdiaga2025}, we prove that
$(G(\X),G(\X)^{\perp *})$ is a hereditary $*$-cotorsion pair when
$\X$ is closed under direct summands and finite direct sums and is
$\kappa$-Kaplansky for every sufficiently large regular cardinal
$\kappa$. The latter smallness condition holds, in particular, for
deconstructible classes in Grothendieck categories.

Section~4 treats relative purity determined by a set of finitely
presented modules. We first verify the Kaplansky condition and apply
Section~3. We then adapt the periodic filtration method of
\cite{CortesIzurdiaga2025} to obtain functorial completeness under either
uniform $\lambda$-pure-injectivity of the relative projectives or an
orthogonality condition on relative periodic modules. Independently of
these hypotheses, periodic modules generate a functorially complete
hereditary relative cotorsion pair, and periodic orthogonality can be
tested on a set of modules of bounded cardinality. Finally, for tests
$R/\mathfrak p^{n_{\mathfrak p}}$ over a Dedekind domain, we compute
both classes of the Gorenstein cotorsion pair and construct explicit
approximations, allowing infinitely many distinct primes.

\section{Preliminaries}
\subsection{Relative Homological Algebra}
We recall some necessary notions of relative homological algebra, following \cite{EnochsJenda2000}. Let $\A$ be an abelian category and $\X$ be a class of objects of $\A$. A morphism $f:B\to A$ between two objects $A, \ B \in \A$ is called $\X$-epic if the morphism $\operatorname{Hom}_\A(X,f)$ is an epimorphism for all $X\in \X$. An $\X$-precover of $A$ is an $\X$-epic morphism $f:X\to A$ with $X\in \X$. The class $\X$ is called precovering if every object $A\in \A$ has an $\X$-precover. Dually, for a class $\Y$, we have the notions of $\Y$-monic, $\Y$-preenvelope and  preenveloping class. Moreover, the class $\X$ is called admissible if every $\X$-precover is also an epimorphism and dually the class $\Y$ is called coadmissible if every $\Y$-preenvelope is also a monomorphism.
\begin{defin}
    Let $C_\bullet$ be a complex in $Ch(\mathcal{A})$. We say that $C_\bullet$ is right (resp. left) $\X$-acyclic if the complex $\operatorname{Hom}_\mathcal{A}(X,C_\bullet)$ (resp. $\operatorname{Hom}_\mathcal{A}(C_\bullet,X)$) is acyclic for every $X \in \X$. 
\end{defin}
\noindent A (left) $\X$-resolution of $A$ is a right $\X$-acyclic complex
     \begin{equation*}
        \dots \to X_1 \to X_0 \to A \to 0 
    \end{equation*}
with each $X_i \in \X$. If $\X$ is precovering, then every object $A\in \A$ has an $\X$-resolution by \cite[Proposition 8.1.3]{EnochsJenda2000}, which is unique up to homotopy by standard arguments. These resolutions can be used to define derived functors relative to $\X$. If $T:\A^{\mathrm{op}}\to\mathbf{Ab}$ is an additive functor and $X_\bullet$ a deleted $\X$-resolution, then the right derived functors $R^nT$ are the nth cohomology groups of $T(X_\bullet)$. Dually, if $\Y$ is preenveloping, we have the notions of a (right) $\Y$-coresolution and for an additive functor $S:\A\to\mathbf{Ab}$ we have the right derived functors relative to $\Y$. These functors have expected properties, such as a long exact sequence, but we have to replace short exact sequences by right $\X$-acyclic ones or left $\Y$-acyclic ones, respectively, as seen in the following theorem.
\begin{theor}\cite[Theorem 8.2.3]{EnochsJenda2000}\label{enochsjenda} Let $\X$ be a precovering class closed under finite direct sums in an abelian category $\mathcal{A}$ and $0 \to M' \to M \to M'' \to 0$ be a right $\X$-acyclic complex of objects of $\mathcal{A}$. Then, for any additive functor $T:\A^{\mathrm{op}}\to\mathbf{Ab}$, there is a long exact sequence
    \begin{align*}
        0 &\to R^0 T(M'') \to R^0 T(M) \to R^0 T(M') \to \dots \\
        &\to R^{n-1} T(M'') \to R^{n-1} T(M) \to R^{n-1} T(M') \to R^n T(M'') \to \dots.
    \end{align*}

\end{theor}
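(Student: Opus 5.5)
The plan is the standard horseshoe-lemma argument for relative resolutions, applied to a contravariant functor. Take $\X$-resolutions $X'_\bullet \to M'$ and $X''_\bullet \to M''$; these exist by \cite[Proposition 8.1.3]{EnochsJenda2000} because $\X$ is precovering. Put $X_n = X'_n \oplus X''_n$, which lies in $\X$ since $\X$ is closed under finite direct sums. The first task is to build a differential on $X_\bullet$ and an augmentation $X_0 \to M$ so that $X_\bullet \to M$ is an $\X$-resolution and
\[
0 \to X'_\bullet \to X_\bullet \to X''_\bullet \to 0
\]
is a degreewise split short exact sequence of complexes, compatible with $0\to M'\to M\to M''\to 0$.

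\textbf{Degree zero.} Because the sequence $0\to M'\to M\to M''\to 0$ is right $\X$-acyclic, $\operatorname{Hom}(X''_0,M)\to\operatorname{Hom}(X''_0,M'')$ is surjective. Hence the augmentation $X''_0\to M''$ lifts to a map $X''_0\to M$. Together with $X'_0\to M'\to M$ this gives $X_0\to M$.

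Next I check that $X_0\to M$ is $\X$-epic. For $X\in\X$, apply $\operatorname{Hom}(X,-)$ to the diagram with exact rows
\[
0\to X'_0\to X_0\to X''_0\to 0 \qquad\text{and}\qquad 0\to M'\to M\to M''\to 0 .
\]
Both rows remain exact: the first because it is split, the second by right $\X$-acyclicity. The two outer vertical maps are surjective, so a diagram chase (snake lemma) shows the middle vertical map is surjective. The same chase shows that the sequence of kernels $0\to K'_0\to K_0\to K''_0\to 0$ is again right $\X$-acyclic. This sequence is exact in $\A$, being the kernel sequence of maps between short exact sequences with epic outer maps; where $\X$ is not admissible one works entirely with $\operatorname{Hom}(X,-)$-exactness, which is all the induction needs.

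\textbf{Inductive step.} Iterating on the kernel sequences produces the full resolution $X_\bullet$. It is right $\X$-acyclic because $\operatorname{Hom}(X,-)$ applied to it is glued from the short exact sequences obtained at each stage.

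\textbf{Long exact sequence.} Apply $T$ to the deleted complexes. Since each $0\to X'_n\to X_n\to X''_n\to 0$ is split and $T$ is additive, $0\to T(X''_\bullet)\to T(X_\bullet)\to T(X'_\bullet)\to 0$ is a short exact sequence of cochain complexes. Its cohomology long exact sequence is the required sequence, once we note that $R^nT$ computed from any $\X$-resolution agrees up to natural isomorphism, because resolutions are unique up to homotopy. The sequence begins with $0$ because the complexes are concentrated in nonnegative degrees.

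The main obstacle is the verification in the degree-zero and inductive steps that the constructed map is $\X$-epic and that the kernel sequence stays right $\X$-acyclic. This is exactly where the hypothesis of right $\X$-acyclicity, rather than mere exactness, is used, and I would handle it by the $\operatorname{Hom}(X,-)$ diagram chase described above.
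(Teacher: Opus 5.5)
Your proposal is correct and is essentially the standard proof behind \cite[Theorem~8.2.3]{EnochsJenda2000}; the paper only cites that result and gives no proof of its own. The argument in both cases is the same: a relative horseshoe construction gives $\X$-resolutions forming a degreewise split short exact sequence of complexes, applying the additive functor $T$ gives a short exact sequence of cochain complexes, and its cohomology sequence is the required one. You also handle the non-admissible case correctly: the induction needs only that each kernel complex is $\operatorname{Hom}(X,-)$-exact for $X\in\X$, while exactness in $\A$ holds only when $\X$ is admissible.
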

For $M,N\in\A$, apply these constructions to
$T=\operatorname{Hom}_{\A}(-,N)$ and
$S=\operatorname{Hom}_{\A}(M,-)$. We write
\[
\operatorname{Ext}^n_{\X}(M,N)=R^nT(M),\qquad
\operatorname{Ext}^n_{\Y}(M,N)=R^nS(N).
\]
\begin{remark} 
    Let $f:M\to N$ be a morphism and $X_1\to X_0\to M\to 0$ be the beginning of an $\X$-resolution of $M$. Then the following diagram commutes
    \[\begin{tikzcd}
	{X_1} & {X_0} & M & 0 \\
	&& N
	\arrow["{d_1}", from=1-1, to=1-2]
	\arrow["0"', from=1-1, to=2-3]
	\arrow["{\varepsilon }", from=1-2, to=1-3]
	\arrow["{f\varepsilon }", from=1-2, to=2-3]
	\arrow[from=1-3, to=1-4]
	\arrow["f", from=1-3, to=2-3]
\end{tikzcd}\]
Thus, we have a map $\operatorname{Hom}_\A(M,N)\to\operatorname{Ext}^0_\X(M,N)=\ker(d_1^*)$. Additionally, if the complex $X_1\to X_0\to M\to 0$ is acyclic (for example, if $\X$ is admissible), then this map is an isomorphism by the universal property of the cokernel. Dual results hold for $\operatorname{Ext}^0_\Y(M,N)$.
\end{remark}
In order to justify the name of these functors, we note that if $\X$ is admissible, then every right $\X$-acyclic complex is, in fact, acyclic. Therefore $\operatorname{Ext}^1_\X(M,N)$ is a subfunctor of the classical Yoneda $\operatorname{Ext}^1_\A(M,N)$, consisting of isomorphism classes of right $\X$-acyclic short exact sequences $0\to N\to K\to M\to 0$, see \cite{Auslander01011993}.

\subsection{Balanced Pairs}
Balanced pairs were introduced by Chen in \cite{CHEN20102718} and provide an ideal framework for doing relative homological algebra. Enochs and Jenda in \cite{EnochsJenda2000} defined the notion of right balancedness for a functor $T$. These two notions coincide in the case where $T=\operatorname{Hom}_\A$.
The following equivalent formulation is
\cite[Proposition~2.2]{CHEN20102718}.
\begin{defin}
    Let $\A$ be an abelian category. A pair $(\X,\Y)$ of subclasses of objects of $\A$ is called a balanced pair if $\X$ is precovering, $\Y$ is preenveloping and the class of right $\X$-acyclic complexes coincides with the class of left $\Y$-acyclic complexes.
\end{defin}
 By \cite[Corollary 2.3]{CHEN20102718} if $(\X,\Y)$ is a balanced pair, $\X$ is admissible if and only if $\Y$ is coadmissible. In this case, the balanced pair is called admissible. The following are standard examples of balanced pairs.
\begin{ex}
Let $(\X,\mathcal Z,\Y)$ be a complete hereditary cotorsion triple,
that is, both $(\X,\mathcal Z)$ and $(\mathcal Z,\Y)$ are complete
hereditary cotorsion pairs. Then $(\X,\Y)$ is an admissible balanced
pair by \cite[Proposition~4.2]{estrada2020balanced}; see also
\cite[Proposition~2.6]{CHEN20102718}.
An abelian category admits a complete hereditary cotorsion triple if
and only if it has enough projectives and injectives
\cite[Theorem~4.4]{estrada2020balanced}. In that case,
$(\operatorname{Proj},\operatorname{All},\operatorname{Inj})$ gives
the standard balanced pair $(\operatorname{Proj},\operatorname{Inj})$.
\end{ex}
\begin{ex}
    By \cite[Example 8.3.2]{EnochsJenda2000} the pair $(PProj,PInj)$ of pure projective and pure injective objects is a balanced pair in the setting of module categories. More generally, the same construction applies in a locally finitely presented abelian category: the pure exact structure has enough pure projectives and pure injectives; see \cite{CrawleyBoevey1994}.
\end{ex}
\begin{ex}
In the category of left $R$-modules, denote by $\operatorname{GProj}$ and
$\operatorname{GInj}$ the ordinary Gorenstein projective and
Gorenstein injective classes. Suppose that $R$ is left virtually
Gorenstein, meaning that
$\operatorname{GProj}^{\perp}={}^{\perp}\operatorname{GInj}$.
Then
$(\operatorname{GProj},\operatorname{GProj}^{\perp},
\operatorname{GInj})$ is a complete hereditary cotorsion triple,
and $(\operatorname{GProj},\operatorname{GInj})$ is an admissible
balanced pair.
\end{ex}
Balancedness gives natural isomorphisms
\[
\operatorname{Ext}^n_{\X}(M,N)\cong
\operatorname{Ext}^n_{\Y}(M,N)
\qquad(n\geq0),
\]
for all $M,N\in\A$; see \cite[Lemma~2.1]{CHEN20102718}.
We denote their common value by $\operatorname{Ext}^n_*(M,N)$.
This allows dimension shifting in both variables.
\subsection{Exact Categories and Cotorsion Pairs}
Exact categories were introduced by Quillen in \cite{Quillen1973} and an axiomatic description can be found in \cite{Buhler2010}. An exact category $(\A,\mathcal{E})$ is an additive category $\A$ equipped with a class $\mathcal{E}$ of kernel-cokernel pairs $A\overset{i}{\to}{B}\overset{p}{\to}C$, called conflations, which satisfy certain axioms, resembling those of short exact sequences. The maps $i$ are called inflations and the maps $p$ are called deflations. This class $\mathcal{E}$ permits the definition of a Yoneda Ext functor, denoted $\operatorname{Ext}^1_{\mathcal E}$, see \cite[Chapter 1]{gillespie2025abelian} for details.
\begin{defin}
A pair of classes $(\mathcal F,\mathcal C)$ in $(\A,\mathcal E)$ is
a $*$-cotorsion pair if $\mathcal F={}^{\perp_*}\mathcal C$ and
$\mathcal C=\mathcal F^{\perp_*}$, where
\[
{}^{\perp_*}\mathcal C
=\{F:\operatorname{Ext}^1_{\mathcal E}(F,C)=0
\text{ for all }C\in\mathcal C\},
\]
and $\mathcal F^{\perp_*}$ is defined dually.
It is hereditary if $\mathcal F$ is closed under kernels of deflations
between its objects and $\mathcal C$ is closed under cokernels of
inflations between its objects.
It is complete if every $M\in\A$ admits conflations
\[
0\to C\to F\to M\to0,
\qquad
0\to M\to C'\to F'\to0,
\]
with $F,F'\in\mathcal F$ and $C,C'\in\mathcal C$.
The maps $F\to M$ and $M\to C'$ are called a special
$\mathcal F$-precover and a special $\mathcal C$-preenvelope,
respectively. The pair is functorially complete if these two
approximation conflations can be chosen functorially in $M$.
\end{defin}
Let $\A$ be an abelian category and  $(\X,\Y)$ an admissible balanced pair. Then the class $\mathcal{E}$ of all right $\X$-acyclic short exact sequences
makes $(\A,\mathcal{E})$ an exact category with $\operatorname{Ext}^1_{\mathcal E}=\operatorname{Ext}^1_*$. See \cite[Lemma 2.3]{Lietal2015} for the proof of closure axioms required for the exact structure. We also assume that $\X$ is closed under direct summands and finite direct sums. Finite sums of objects of $\Y$ are relative injective. Thus,
if needed, replacing $\Y$ by its closure under finite direct sums
does not change the induced exact structure or balancedness.
We use this convention throughout. This is our framework for the
rest of the paper.

\section{Relative Gorenstein objects}
 Following the generalizations by Sather-Wagstaff, Sharif and White referenced in the introduction \cite{https://doi.org/10.1112/jlms/jdm124} we define our class of interest, the Gorenstein $\X$-objects. Instead of using standard projective objects, we define Gorenstein objects relative to a general class $\X$ by replacing standard acyclicity with right and left $\X$-acyclicity.
\begin{defin}
    Let $\X$ be a class of objects of $\A$. We say that an object $M \in \A$ is a Gorenstein $\X$-object if there exists a complex that is both right and left $\X$-acyclic, $X_\bullet=\dots\to X_{1}\to X_0\to X_{-1}\to\dots$ such that each $X_i \in \X$ for all $i \in \Z$ and $M= Z_0(X_\bullet)$. Denote by $G(\X)$ the class of all Gorenstein $\X$-objects.
\end{defin}

   Observe that $Z_i(X_\bullet) \in G(\X)$ for all $i \in \Z$. The augmented sequence
\[
\cdots\to X_{i+2}\to X_{i+1}\to Z_i(X_\bullet)\to0
\]
is a right $\X$-acyclic $\X$-resolution of $Z_i(X_\bullet)$.

\begin{remark}
    We have the inclusion $\X\subseteq G(\X)$. Indeed, let $X \in \X$. The complex with $X\xrightarrow{1_X}X$ in homological degrees $1$ and $0$, and zero elsewhere, is contractible and has zeroth cycle $X$. It is therefore both right and left $\X$-acyclic. Additionally, we have the inclusion $\X\subseteq G(\X)^{\perp_*}$. Indeed, let $M\in G(\X)$ and $X\in \X$. Then $M$ has a right and left $\X$-acyclic $\X$-resolution and this can be used to compute the relative $\operatorname{Ext}_*$ functor. Hence $\operatorname{Ext}_*^n(M,X)=0$ for all $n\geq 1$.
\end{remark}

   This definition is an immediate generalisation of the classical case, for if $\X$ is the class of projective (resp. injective) modules  in $R$-Mod, then $G(\X)$ is the class of Gorenstein projective (resp. injective) modules. Below is a different example.

\begin{ex}\label{pureprojectivepureinjective}
    Let $\X=PProj$ be the class of pure projective modules in $R$-Mod. Then $G(PProj)=PProj$. Indeed, if a complex $X$ is right $PProj$-acyclic then it is pure acyclic by \cite[Theorem 3.6]{emmanouil2016pure}. But a pure acyclic complex of pure projective objects is contractible by \cite[Corollary 3.7]{emmanouil2016pure}, therefore every cycle $Z_n(X)$ is a direct summand of a pure projective module and thus pure projective itself. Since $PProj\subseteq G(PProj)$ we have the desired equality. 
\end{ex} 
\subsection{Heredity}
We first establish the vanishing and closure properties needed for
heredity. These arguments use the relative long exact sequences.

\begin{lem}\label{exti}
    Let $M \in G(\X)$, $N\in G(\X)^{\perp*}$. Then $\operatorname{Ext}^i_*(M,N)=0$ for all $i\geq 1$.
\end{lem}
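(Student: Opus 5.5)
The natural approach is dimension shifting in the first variable along the $\X$-resolution furnished by the defining complex. Let $M\in G(\X)$, so $M=Z_0(X_\bullet)$ for a complex $X_\bullet$ with all terms in $\X$ that is both right and left $\X$-acyclic. For each $j\geq 0$ consider the short sequence
\[
0\to Z_{j+1}(X_\bullet)\to X_{j+1}\to Z_j(X_\bullet)\to 0 .
\]
This sequence is right $\X$-acyclic: applying $\operatorname{Hom}_\A(X,-)$ for $X\in\X$ gives a complex whose exactness at each spot follows from the exactness of $\operatorname{Hom}_\A(X,X_\bullet)$. Concretely, a map $X\to Z_j$ factors through $X_{j+1}$ because $\operatorname{Hom}(X,X_\bullet)$ is acyclic at degree $j$ and $Z_j=\ker d_j$. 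Left exactness is automatic for $\operatorname{Hom}(X,-)$ once the sequence is exact, and exactness holds because $\X$ is admissible, so right $\X$-acyclic complexes are acyclic. Hence it is a conflation in $\mathcal E$. Each $Z_j(X_\bullet)$ lies in $G(\X)$, as observed after the definition (shift the complex).

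Next I apply \cref{enochsjenda} with $T=\operatorname{Hom}_\A(-,N)$ to this conflation. This gives a long exact sequence containing
\[
\operatorname{Ext}^{i}_*(X_{j+1},N)\to \operatorname{Ext}^{i}_*(Z_{j+1},N)\to \operatorname{Ext}^{i+1}_*(Z_j,N)\to \operatorname{Ext}^{i+1}_*(X_{j+1},N).
\]
For $i\geq 1$ the outer terms vanish, since $X_{j+1}\in\X$ has the trivial $\X$-resolution; thus $\operatorname{Ext}^n_\X(X,-)=0$ for $n\geq1$, and $\operatorname{Ext}_*=\operatorname{Ext}_\X$. So $\operatorname{Ext}^{i+1}_*(Z_j,N)\cong\operatorname{Ext}^i_*(Z_{j+1},N)$ for all $i\geq1$.

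Iterating gives $\operatorname{Ext}^{i}_*(M,N)\cong \operatorname{Ext}^{1}_*(Z_{i-1}(X_\bullet),N)$ for $i\geq1$. Since $Z_{i-1}(X_\bullet)\in G(\X)$ and $N\in G(\X)^{\perp_*}$, and $\operatorname{Ext}^1_{\mathcal E}=\operatorname{Ext}^1_*$, this vanishes.

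The main point to check carefully is the right $\X$-acyclicity of the short sequences cut out of $X_\bullet$, which is what licenses the long exact sequence. It is also worth confirming that $\operatorname{Ext}^n_*(X,-)=0$ for $X\in\X$. Both are routine: left $\X$-acyclicity of $X_\bullet$ is not even needed here, only right acyclicity and membership of the cycles in $G(\X)$.
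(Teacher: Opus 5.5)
Your proof is correct and follows essentially the same argument as the paper's. Both dimension-shift along the cycle conflations $0\to Z_{j+1}(X_\bullet)\to X_{j+1}\to Z_j(X_\bullet)\to 0$, using relative projectivity of the terms $X_{j+1}\in\X$, to obtain $\operatorname{Ext}^i_*(M,N)\cong\operatorname{Ext}^1_*(Z_{i-1}(X_\bullet),N)=0$. You additionally spell out two checks the paper leaves implicit: right $\X$-acyclicity of these short sequences, and vanishing of $\operatorname{Ext}^n_*(X,-)$ for $X\in\X$ and $n\geq1$. You also correctly note that left $\X$-acyclicity is not needed here.
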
 
\begin{proof}
Let $X_\bullet$ be a right and left $\X$-acyclic complex such that $Z_0(X_\bullet)=M$.
For each $j$, its cycle sequence
\[
0\to Z_j(X_\bullet)\to X_j\to Z_{j-1}(X_\bullet)\to0
\]
is a relative conflation. Since each $X_j$ is relatively projective,
successive dimension shifting gives
\[
\operatorname{Ext}^i_*(M,N)
\cong\operatorname{Ext}^1_*(Z_{i-1}(X_\bullet),N)=0
\qquad(i\geq1).
\]
The last equality follows from $Z_{i-1}(X_\bullet)\in G(\X)$
and $N\in G(\X)^{\perp *}$.
\end{proof} 
\begin{lem}
    $G(\X)^{\perp*}$ has the two-out-of-three property for relative conflations.
\end{lem}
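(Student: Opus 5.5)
Fix a relative conflation $0\to A\to B\to C\to 0$ in $\mathcal E$, i.e.\ a right $\X$-acyclic short exact sequence. We show that if two of $A,B,C$ lie in $G(\X)^{\perp*}$, then so does the third. The plan is to apply the long exact sequence for $\operatorname{Ext}_*$ in the second variable and combine it with Lemma~\ref{exti}, which gives vanishing of $\operatorname{Ext}^i_*(M,-)$ for all $i\geq1$, not just $i=1$.

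Fix $M\in G(\X)$. By balancedness the conflation is also left $\Y$-acyclic, and $\operatorname{Ext}^n_*(M,-)\cong\operatorname{Ext}^n_\Y(M,-)$. The dual of Theorem~\ref{enochsjenda}, for the covariant functor $S=\operatorname{Hom}_\A(M,-)$ and the preenveloping class $\Y$ (closed under finite sums by our convention), therefore gives a long exact sequence
\[
\cdots\to\operatorname{Ext}^1_*(M,A)\to\operatorname{Ext}^1_*(M,B)\to\operatorname{Ext}^1_*(M,C)\to\operatorname{Ext}^2_*(M,A)\to\cdots.
\]
Alternatively, we may use the $\X$-side directly: $\operatorname{Hom}_\A(X_\bullet,-)$ applied to a right $\X$-acyclic sequence gives a short exact sequence of complexes, and the long exact sequence follows from the snake lemma.

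We then treat the three cases.

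\emph{$A,C\in G(\X)^{\perp*}$.} The segment $\operatorname{Ext}^1_*(M,A)\to\operatorname{Ext}^1_*(M,B)\to\operatorname{Ext}^1_*(M,C)$ has both outer terms zero, so the middle term vanishes.

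\emph{$A,B\in G(\X)^{\perp*}$.} The segment $\operatorname{Ext}^1_*(M,B)\to\operatorname{Ext}^1_*(M,C)\to\operatorname{Ext}^2_*(M,A)$ applies. The left term is zero by hypothesis. The right term is zero by Lemma~\ref{exti} applied to $A$; this is the point where vanishing in degree $2$ is needed. Hence $\operatorname{Ext}^1_*(M,C)=0$.

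\emph{$B,C\in G(\X)^{\perp*}$.} This case is the main obstacle, since the segment $\operatorname{Hom}(M,B)\to\operatorname{Hom}(M,C)\to\operatorname{Ext}^1_*(M,A)\to\operatorname{Ext}^1_*(M,B)=0$ does not make the connecting map zero. The plan is to dimension shift in the first variable. Since $M\in G(\X)$, there is a relative conflation $0\to M\to X_0\to M'\to 0$ with $X_0\in\X$ and $M'=Z_{-1}(X_\bullet)\in G(\X)$; this uses the left $\X$-acyclicity of the complex. From it we get $\operatorname{Ext}^1_*(M,A)\cong\operatorname{Ext}^2_*(M',A)$, because $\operatorname{Ext}^{\ge1}_*(X_0,-)=0$, $X_0$ being relatively projective. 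The long exact sequence for $M'$ then gives
\[
\operatorname{Ext}^1_*(M',C)\to\operatorname{Ext}^2_*(M',A)\to\operatorname{Ext}^2_*(M',B).
\]
The left term is zero since $M'\in G(\X)$ and $C\in G(\X)^{\perp*}$, and the right term is zero by Lemma~\ref{exti}. Hence $\operatorname{Ext}^1_*(M,A)\cong\operatorname{Ext}^2_*(M',A)=0$.

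The only points to check are that the cosyzygy sequence is indeed a conflation in $\mathcal E$, which holds because $X_\bullet$ is right $\X$-acyclic, and that the dimension-shift isomorphism holds for $n=1$. The latter requires $\operatorname{Ext}^1_*(X_0,A)=0$, which is immediate since $\X$-objects are relatively projective.
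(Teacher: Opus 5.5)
Your proposal is correct and follows the paper's proof essentially step for step. You use the same three cases via the relative long exact sequence in the second variable, with Lemma~\ref{exti} supplying the degree-two vanishing, and you handle the case $B,C\in G(\X)^{\perp*}$ by the same dimension shift $\operatorname{Ext}^1_*(M,A)\cong\operatorname{Ext}^2_*(M',A)$ along a conflation $0\to M\to X\to M'\to0$ with $X\in\X$ and $M'\in G(\X)$.
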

\begin{proof}
Consider a relative conflation
\begin{equation}\label{eq1}
0\to N'\to N\to N''\to0.
\end{equation}
If $N',N''\in G(\X)^{\perp *}$, the relative long exact sequence gives
$\operatorname{Ext}^1_*(M,N)=0$ for every $M\in G(\X)$, so
$N\in G(\X)^{\perp *}$. If $N',N\in G(\X)^{\perp *}$, the same sequence and
\cref{exti} give
\[
\operatorname{Ext}^1_*(M,N'')\hookrightarrow
\operatorname{Ext}^2_*(M,N')=0,
\]
hence $N''\in G(\X)^{\perp *}$.

\noindent Finally, suppose $N,N''\in G(\X)^{\perp *}$. For $M\in G(\X)$, choose
a conflation
\begin{equation}\label{eq2}
0\to M\to X\to M'\to0,
\qquad X\in\X,\quad M'\in G(\X).
\end{equation}
The long exact sequence for \eqref{eq1} gives
\[
\operatorname{Ext}^1_*(M',N'')\to
\operatorname{Ext}^2_*(M',N')\to
\operatorname{Ext}^2_*(M',N).
\]
Its outside terms vanish by \cref{exti}. Relative projectivity of
$X$ and \eqref{eq2} give
$\operatorname{Ext}^1_*(M,N')\cong
\operatorname{Ext}^2_*(M',N')=0$. Thus $N'\in G(\X)^{\perp *}$.
\end{proof}
\begin{remark}\label{applicationsofbalancedpairs}
By \cite[Theorem 3.8]{Lietal2015} combined with the above lemma, we get that the pair $({}^{\perp *}(G(\X)^{\perp *}),G(\X)^{\perp*})$ is a hereditary $*$-cotorsion pair. This means that ${}^{\perp *}(G(\X)^{\perp *})$ is closed under kernels of $\X$-epic morphisms between its objects.
    
\end{remark}
\subsection{Kaplansky condition}
For the remainder of the section, we will further assume that $\A$ is locally presentable. We need this condition to guarantee the existence of set-indexed products, as well as certain presentability properties of objects. Crucial properties that we will be using are the following. Firstly, every object is $\kappa$-generated for some cardinal $\kappa$. Secondly, the image of a $<\kappa$-generated object is again $<\kappa$-generated. Lastly, isomorphism classes of $\kappa$-presentable objects form a set, not a proper class. Let us now proceed to the set-theoretic conditions we shall impose on the balanced pair.
\begin{defin}
Let $\kappa$ be a regular cardinal. A class $\X$ is $\kappa$-Kaplansky if for every $F \in \X$ and every $<\kappa$-generated $M \subseteq F$ there exists $M \subseteq N \subseteq F$ such that $N$ is $<\kappa$-presented and $N \in \X$ and $F/N \in \X$.
\end{defin}
The definition of Kaplansky classes seems restrictive, but the following proposition gives us an important family of classes with this property. A class of objects $\X$ is called deconstructible if there exists a set $\mathcal{S}$ such that all objects of $\X$ are filtered by objects of $\mathcal{S}$ so that $\X=\operatorname{Filt}(\mathcal{S})$, see \cite{Stovicek2013} for details.
\begin{prop}\cite[Corollary 2.7]{Stovicek2013}
If $\A$ is a Grothendieck category and $\X$ is a deconstructible class, then there exists a regular cardinal $\lambda $ such that $\X$ is $\kappa $-Kaplansky for every regular cardinal $\kappa  \ge \lambda $. 
\end{prop}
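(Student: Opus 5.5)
We propose to follow the strategy of Šťovíček's deconstructibility theory. Let $\mathcal{S}$ be a set of objects with $\X=\operatorname{Filt}(\mathcal{S})$. Since $\A$ is Grothendieck, hence locally presentable, we may fix a regular cardinal $\lambda$ with three properties. First, every $S\in\mathcal{S}$ is $<\lambda$-presentable. Second, $<\lambda$-presentable objects are closed under extensions and under the usual subquotient constructions. Third, $\lambda$ exceeds the presentability rank of a fixed generator of $\A$. Now fix a regular $\kappa\geq\lambda$, an object $F\in\X$ with an $\mathcal{S}$-filtration $(F_\alpha)_{\alpha\leq\sigma}$, and a $<\kappa$-generated subobject $M\subseteq F$.

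The main tool is the Hill lemma: from the filtration $(F_\alpha)$ we obtain a complete lattice $\mathcal{H}$ of subobjects of $F$ with the following properties. It contains every $F_\alpha$. It is closed under arbitrary sums and intersections. For $N\subseteq N'$ in $\mathcal{H}$, the quotient $N'/N$ is $\mathcal{S}$-filtered, so in particular $N'\in\X$ and $F/N\in\X$. Finally, for every $N\in\mathcal{H}$ and every $<\kappa$-generated subobject $K\subseteq F$ there is $N'\in\mathcal{H}$ with $N+K\subseteq N'$ and $N'/N$ $<\kappa$-presented. We would construct $\mathcal{H}$ as in Hill's original argument, as the family of unions $\sum_{\alpha\in S}A_\alpha$ over "closed" subsets $S$ of ordinals, where $F_{\alpha+1}=F_\alpha+A_\alpha$ with $A_\alpha$ $<\lambda$-presentable.

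With the Hill lemma in hand the Kaplansky property follows directly. We apply the last property with $N=0$ and $K=M$. This gives $N'\in\mathcal{H}$ containing $M$ with $N'$ $<\kappa$-presented. By the lattice properties, $N'\in\X$ and $F/N'\in\X$.

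We expect the main obstacle to be the final property of the Hill lemma in a general Grothendieck category. In module categories one can argue elementwise. Here one has to argue with generated subobjects: one must show that a $<\kappa$-generated $K$ lies inside a sum of $<\kappa$ many of the $A_\alpha$. We would prove this by expressing $K$ as an image of a coproduct of $<\kappa$ copies of the generator. By $<\kappa$-presentability, each copy factors through some $F_\alpha$, and then a closure argument over $<\kappa$ countable steps controls the indices. The resulting sum is then $<\kappa$-presented, because it is a $<\kappa$-length filtration by $<\lambda$-presented pieces, using regularity of $\kappa$. This is exactly the content of \cite[Corollary 2.7]{Stovicek2013}, which we would cite for these technical verifications.
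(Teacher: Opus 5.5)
Your proposal is correct and takes the same route as the source: the paper gives no argument of its own beyond citing \cite[Corollary 2.7]{Stovicek2013}, and that corollary is obtained exactly as you describe, by applying the Hill lemma with $N=0$. One step of your sketch should be rephrased: factoring a copy of the generator through some $F_\alpha$ gives no control over how many $A_\beta$ are involved, so instead use that $F=\sum_{\beta<\sigma}A_\beta$ is the $\kappa$-directed union of the subsums $\sum_{\beta\in T}A_\beta$ with $|T|<\kappa$, and then enlarge $T$ to a closed set, which is possible because each $A_\alpha\cap F_\alpha$ is $<\kappa$-generated (its quotient $A_\alpha/(A_\alpha\cap F_\alpha)\cong F_{\alpha+1}/F_\alpha$ is $<\kappa$-presentable).
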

 We can now prove that $(G(\X), G(\X)^{\perp*})$ is a $*$-cotorsion pair. Recently, Cort\'{e}s-Izurdiaga and \v{S}aroch handled the case of the Gorenstein projective cotorsion pair, and we can generalize their arguments here. We will show that ${}^{\perp *}(G(\X)^{\perp *}) = G(\X)$. We begin by generalizing \cite[Lemma 3.1]{CortesIzurdiaga2025} and then proceed with the main theorem.  
\begin{lem}\label{izur}Let $\X$ be a class such that there exists a regular cardinal $\lambda$ such that $\X$ is $\lambda '$-Kaplansky for every regular cardinal $\lambda '\geq \lambda $.
Let $M \in {}^{\perp *}(G(\X)^{\perp *})$ be $\kappa $-generated for some $\kappa $. Let $\kappa '=\max(\lambda ,\kappa )^+$. Then there exists a $\kappa '$-presentable $L \in \X \subseteq G(\X)^{\perp*}$ and $\varphi : M \to L$ such that for every $N \in G(\X)^{\perp*}$ and every $f:M \to N$, there exists $g:L\to N$ such that $f=g\varphi $.
\end{lem}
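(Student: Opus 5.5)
The plan is a contradiction argument built on a product trick, combined with the $\kappa'$-Kaplansky property of $\X$ to shrink an object of $\X$ to a $\kappa'$-presentable one. The hypothesis $M\in{}^{\perp *}(G(\X)^{\perp *})$ is what should let maps from $M$ into $G(\X)^{\perp *}$ be routed through objects of $\X$.

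\textbf{Step 0: reduction to a set of candidates.} Since $\A$ is locally presentable, isomorphism classes of $\kappa'$-presentable objects form a set. Each such $L$ admits only a set of morphisms $M\to L$. Hence the pairs $(L,\varphi)$ with $L\in\X$ $\kappa'$-presentable form, up to isomorphism, a set $I$.

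\textbf{Step 1: the universal test object.} Suppose no pair $(L,\varphi)$ has the factorization property. Then for each $i=(L_i,\varphi_i)\in I$ pick $N_i\in G(\X)^{\perp *}$ and $f_i:M\to N_i$ that does not factor through $\varphi_i$. Put $N=\prod_i N_i$ and $f=(f_i):M\to N$.

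We check $N\in G(\X)^{\perp *}$. For $G\in G(\X)$, $\operatorname{Ext}^1_*(G,-)$ is computed by $\operatorname{Hom}_\A(X_\bullet,-)$ for a fixed $\X$-resolution $X_\bullet$ of $G$. This commutes with products, so $\operatorname{Ext}^1_*(G,\prod N_i)\cong\prod\operatorname{Ext}^1_*(G,N_i)=0$.

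It therefore suffices to show that $f$ factors as $g\varphi$ for some $\kappa'$-presentable $L\in\X$. Composing $g$ with the projections then factors every $f_i$ through $\varphi$, and $(L,\varphi)$ is one of the pairs in $I$, which is a contradiction.

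\textbf{Step 2: factoring through an object of $\X$ (main obstacle).} We need $F\in\X$ and maps $M\xrightarrow{\psi}F\xrightarrow{h}N$ with $h\psi=f$. The approach is as follows.
\begin{enumerate}
\item Take a relative conflation $0\to N\to E\to C\to 0$ built from the $\X$-resolution machinery. Arrange that $C\in G(\X)$ and that $E$ is tied to $\X$, for instance by a pushout along a relative conflation $0\to Z\to X\to C\to 0$ with $X\in\X$, $C\in G(\X)$.
\item Use $\operatorname{Ext}^1_*(M,N)=0$ and the vanishing of $\operatorname{Ext}^{\geq1}_*(G(\X),N)$ from \cref{exti} to lift $f$ along these conflations.
\end{enumerate}
If this fails directly, the fallback is to use the hereditary $*$-cotorsion pair of \cref{applicationsofbalancedpairs}. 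Since $\X\subseteq G(\X)^{\perp *}$, $M$ lies in its left class. The aim is then a relative conflation $0\to M\to W\to M''\to 0$ whose middle term maps into $\X$ and whose cokernel $M''$ lies in ${}^{\perp *}(G(\X)^{\perp *})$. This lets $f$ extend over $W$ and hence factor through an $\X$-object. This step is where I expect the real difficulty.

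\textbf{Step 3: shrinking.} Given $M\xrightarrow{\psi}F\xrightarrow{h}N$ with $F\in\X$, the image $\psi(M)$ is $<\kappa'$-generated, since $M$ is $\kappa$-generated and $\kappa<\kappa'$. As $\kappa'\geq\lambda$ is regular, $\X$ is $\kappa'$-Kaplansky. So there is $\psi(M)\subseteq L\subseteq F$ with $L\in\X$ $<\kappa'$-presented and $F/L\in\X$.

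Take $\varphi:M\to L$ to be the corestriction of $\psi$ and $g=h|_L$. Then $g\varphi=h\psi=f$, and $L\in\X$ is $\kappa'$-presentable, as Step 1 requires. Note that Step 3 only restricts $h$, so it does not need $0\to L\to F\to F/L\to0$ to be a relative conflation. That property would matter only if one wanted maps out of $L$ to extend back to $F$, which is not required here.
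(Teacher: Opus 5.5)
There is a genuine gap in Step 2. Your Steps 0, 1 and 3 match the paper's argument: the product of counterexamples, relative Ext commuting with products, Kaplansky shrinking of the image, and the contradiction via the projection onto the factor indexed by $(L,\varphi)$. But Step 2 is exactly where the hypothesis $M\in{}^{\perp *}(G(\X)^{\perp *})$ must be used, and you do not carry it out. Neither suggested route works as stated.

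\textbf{Route (1).} A relative conflation $0\to N\to E\to C\to 0$ with $C\in G(\X)$ splits, because $N\in G(\X)^{\perp *}$. So it cannot yield a factorization of $f$. The direction is also wrong. The vanishing $\operatorname{Ext}^1_*(M,N)=0$ only lets you extend maps into $N$ across inflations with cokernel $M$, or lift maps $M\to C$ along deflations with kernel $N$. To factor $f\colon M\to N$ through an $\X$-object, you must lift $f$ along a deflation $F\to N$. The obstruction to that lies in $\operatorname{Ext}^1_*(M,K)$ with $K=\ker(F\to N)$, not in $\operatorname{Ext}^1_*(M,N)$.

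\textbf{Route (2).} This is circular. A conflation $0\to M\to W\to M''\to0$ with $W$ mapping to $\X$ and $f$ extending over $W$ is essentially what this lemma, and the main theorem of Section~3 built on it, is meant to produce.

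\textbf{The missing step.} Take an $\X$-precover $p\colon F\to N$. Since the balanced pair is admissible, $p$ is epic, and $0\to K\to F\xrightarrow{p}N\to0$ is a relative conflation. Both $F\in\X\subseteq G(\X)^{\perp *}$ and $N\in G(\X)^{\perp *}$. The two-out-of-three lemma for $G(\X)^{\perp *}$ then gives $K\in G(\X)^{\perp *}$. Specifically, you need its third case, closure under kernels of deflations; that case uses \cref{exti} and the conflations $0\to G\to X\to G'\to 0$ coming from complete resolutions. Hence $\operatorname{Ext}^1_*(M,K)=0$. The exact sequence $\operatorname{Hom}_\A(M,F)\to\operatorname{Hom}_\A(M,N)\to\operatorname{Ext}^1_*(M,K)$ then yields $\psi\colon M\to F$ with $p\psi=f$. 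With $h=p$, your Step 3 finishes the proof exactly as in the paper.
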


\begin{proof}
If not, we can pick representatives from the set of isomorphism classes of $\kappa '$-presentable objects. For every such $\kappa'$-presentable representative $L \in \X$ and for every $\varphi: M \to L$, there exist $N_{L,\varphi} \in G(\X)^{\perp*}$ and $f_{L,\varphi}: M \to N_{L,\varphi}$ that does not factor through $\varphi$. Form the set-indexed product
$N=\prod_{L,\varphi}N_{L,\varphi}$.
This product belongs to $G(\X)^{\perp *}$. Indeed, for fixed
$G\in G(\X)$, compute relative Ext using a fixed deleted
$\X$-resolution of $G$. The functor $\operatorname{Hom}_{\A}$
commutes with products in its second variable, and products in
$\mathbf{Ab}$ are exact. Therefore
\[
\operatorname{Ext}^1_*\left(G,\prod_{L,\varphi}N_{L,\varphi}\right)
\cong\prod_{L,\varphi}\operatorname{Ext}^1_*(G,N_{L,\varphi})=0.
\]
Next, choose an $\X$-precover of $N$ :

\[0 \to K \to L \xrightarrow{p} N \to 0\] 

Let $f:M \to N$ be defined by $f_{L,\varphi }$ as its components. The above short exact sequence is right $\X$-acyclic and $L \in \X \subseteq G(\X)^{\perp *}$. Since $G(\X)^{\perp*}$ has the two-out-of-three property for such conflations, we have $K \in G(\X)^{\perp*}$, thus $\operatorname{Ext}^1_*(M,K) = 0$. So, there exists $g:M \to L $ such that $pg=f$.
\[\begin{tikzcd}
	&&& M & \\
	0 & K & L & N & 0
	\arrow["g"', dashed, from=1-4, to=2-3]
	\arrow["f", from=1-4, to=2-4]
	\arrow[from=2-1, to=2-2]
	\arrow[from=2-2, to=2-3]
	\arrow["p", from=2-3, to=2-4]
	\arrow[from=2-4, to=2-5]
\end{tikzcd}\]

Since $M$ is $\kappa $-generated, it is $<\kappa '$-generated, and so is $\operatorname{im}(g)\subseteq L$. Since $\X$ is $\kappa '$-Kaplansky, there exists a $\kappa '$-presentable $L'\subseteq L$ that contains $\operatorname{im}(g)$, with $L' \in \X$ and $L/L' \in \X$. Then $g$ factors as 

\[M \xrightarrow{\gamma } L' \xrightarrow{i} L\]

We may identify $L'$ with its chosen representative, transporting
$i$ and $\gamma$ along that isomorphism. Combining the two diagrams,
we have 
\[\begin{tikzcd}
	&& {L'} & M & \\
	0 & K & L & N & 0 \\
	&&& {N_{L',\gamma }}
	\arrow["i"', from=1-3, to=2-3]
	\arrow["{\gamma }"', from=1-4, to=1-3]
	\arrow["g"', from=1-4, to=2-3]
	\arrow["f", from=1-4, to=2-4]
	\arrow["{f_{L',\gamma }}"{pos=0.2}, bend left=35, from=1-4, to=3-4]
	\arrow[from=2-1, to=2-2]
	\arrow[from=2-2, to=2-3]
	\arrow["p", from=2-3, to=2-4]
	\arrow[from=2-4, to=2-5]
	\arrow["\pi _{L',\gamma }"', from=2-4, to=3-4]
\end{tikzcd}\]
We observe that $f_{L',\gamma }=\pi _{L',\gamma }f=\pi _{L',\gamma }(pg)=\pi _{L',\gamma }p(i\gamma )=(\pi _{L',\gamma }pi)\gamma $, which means that $f_{L',\gamma }$ factors through $\gamma $, a contradiction.
\end{proof}
We note that the above proof works with a milder condition than being a Kaplansky class, as we did not use the fact that $L/L' \in \X$, only that $L'\in \X$. 
\begin{remark} \label{rem1}
If $(\X,\Y)$ is an admissible balanced pair, then the morphism $\varphi :M\to L$ of the previous lemma is, in fact, a monomorphism.
\end{remark}
 Indeed, let $f:M\to Y$ be a $\Y$-preenvelope. Since $\Y$ is coadmissible this is a monomorphism. Additionally, we have $\operatorname{Ext}^1_*(-,Y)=0$, hence $Y \in All^{\perp*} \subseteq G(\X)^{\perp*}$ and therefore there exists $g:L\to Y$ such that $g\varphi =f$. Finally, $\ker(\varphi )\subseteq\ker(f)=0$, so $\varphi $ is a monomorphism.\\
 We can now state and prove the main theorem of this section. We repeat the necessary conditions for the convenience of the reader.

\begin{theor}\label{cotorsion pair}
Let $\A$ be a locally presentable abelian category and $(\X,\Y)$ be an admissible balanced pair, with $\X$ closed under direct summands and finite direct sums. Additionally, assume there exists a regular cardinal $\lambda$ such that $\X$ is $\kappa $-Kaplansky for all regular cardinals $\kappa  \geq \lambda $. 
Then $(G(\X), G(\X)^{\perp *})$ is a hereditary $*$-cotorsion pair.
\end{theor}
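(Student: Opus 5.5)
By \cref{applicationsofbalancedpairs}, $({}^{\perp *}(G(\X)^{\perp *}),G(\X)^{\perp *})$ is already a hereditary $*$-cotorsion pair. So it suffices to prove the equality ${}^{\perp *}(G(\X)^{\perp *})=G(\X)$. The inclusion $G(\X)\subseteq {}^{\perp *}(G(\X)^{\perp *})$ is immediate from the definition of the perpendicular classes. The work is the reverse inclusion: for $M\in\mathcal{B}:={}^{\perp *}(G(\X)^{\perp *})$ we build a two-sided complex $X_\bullet$ with terms in $\X$, right and left $\X$-acyclic, and $Z_0(X_\bullet)=M$.

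\textbf{Left half.} Take an $\X$-precover $X_0\to M$. It is an epimorphism by admissibility, so $0\to K\to X_0\to M\to 0$ is a relative conflation. Since $\X\subseteq G(\X)\subseteq\mathcal B$ and $\mathcal B$ is closed under kernels of $\X$-epic morphisms between its objects (\cref{applicationsofbalancedpairs}), we get $K\in\mathcal B$. Iterating gives a right $\X$-acyclic resolution $\cdots\to X_1\to X_0\to M\to 0$ whose cycles all lie in $\mathcal B$.

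\textbf{Right half.} By the same argument it suffices, for each $M\in\mathcal B$, to produce a relative conflation $0\to M\xrightarrow{\varphi} L\to M'\to 0$ with $L\in\X$, $M'\in\mathcal B$, and $\varphi$ an $\X$-preenvelope, indeed a $G(\X)^{\perp*}$-preenvelope. We then iterate on $M'$.
\begin{enumerate}
\item Take $\varphi\colon M\to L$ from \cref{izur}. It is a monomorphism by \cref{rem1}.
\item The sequence is a relative conflation, i.e.\ left $\Y$-acyclic. Every $Y\in\Y$ lies in $G(\X)^{\perp*}$, since $\operatorname{Ext}^1_*(-,Y)=0$. Hence every map $M\to Y$ factors through $\varphi$, so $\operatorname{Hom}(\varphi,Y)$ is surjective, and $\operatorname{Hom}(-,Y)$ applied to the sequence is exact. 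By balancedness the sequence is right $\X$-acyclic.
\item $M'\in\mathcal B$. For $N\in G(\X)^{\perp*}$ consider the long exact sequence
\[
\operatorname{Hom}(L,N)\to\operatorname{Hom}(M,N)\to\operatorname{Ext}^1_*(M',N)\to\operatorname{Ext}^1_*(L,N).
\]
The last term vanishes because $L\in\X$ is relatively projective. The first map is surjective by the factorization property of \cref{izur}. Hence $\operatorname{Ext}^1_*(M',N)=0$.
\end{enumerate}

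\textbf{Splicing.} Gluing the two halves at $M$ gives a complex $X_\bullet$ with terms in $\X$ that is built from relative conflations, so it is right $\X$-acyclic. Left $\X$-acyclicity must be checked at every spot.
\begin{itemize}
\item On the right half, each $\varphi$ is a $G(\X)^{\perp*}$-preenvelope and $\X\subseteq G(\X)^{\perp*}$, so $\operatorname{Hom}(-,X)$ stays exact there.
\item On the left half, we use that every cycle $C$ there lies in $\mathcal B$ and $X\in G(\X)^{\perp*}$, so $\operatorname{Ext}^1_*(C,X)=0$. Thus $\operatorname{Hom}(-,X)$ is exact on each short conflation $0\to C'\to X_i\to C\to 0$, and these splice to exactness of $\operatorname{Hom}(X_\bullet,X)$.
\end{itemize}
Therefore $M=Z_0(X_\bullet)\in G(\X)$.

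The main obstacle is the right half: producing the monic $\X$-preenvelope with cokernel again in $\mathcal B$. This is exactly where \cref{izur}, and hence the Kaplansky hypothesis and local presentability, are needed. I expect the delicate point to be the verification in step (2) that the monomorphism from \cref{izur} gives a relative conflation, which is where balancedness is used. The remaining steps are routine dimension shifting.
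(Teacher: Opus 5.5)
Your proposal is correct and follows essentially the same route as the paper. The matching steps are: the coresolution built from the map of \cref{izur}, which is monic by \cref{rem1}, relative via the $\Y$-factorization and balancedness, and has cokernel again in ${}^{\perp *}(G(\X)^{\perp *})$; the $\X$-resolution whose kernels are controlled by \cref{applicationsofbalancedpairs}; and left $\X$-acyclicity of the spliced complex. The only cosmetic difference is that on the right half you check left acyclicity through the preenvelope property, whereas the paper uses $\operatorname{Ext}^1_*$-vanishing uniformly on all cycles, and these two arguments are equivalent.
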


\begin{proof}
We only need to show ${}^{\perp *}(G(\X)^{\perp *}) \subseteq G(\X)$.

Let $M \in {}^{\perp *}(G(\X)^{\perp *})$. Let $\varphi :M\to L$ be as in Lemma \ref{izur} and take the cokernel $N$.
\[0 \rightarrow M \xrightarrow{\varphi} L \rightarrow N \rightarrow 0\]

Observe that the above sequence is a short exact sequence by Remark \ref{rem1}. It is also left $\Y$-acyclic. Indeed, let $Y\in \Y$ and $f:M\to Y$. We have $\Y\subseteq All^{\perp*}\subseteq G(\X)^{\perp*}$. The first inclusion follows from the fact that $\operatorname{Ext}_{*}^1(-, Y) = 0$ for every $Y \in \Y$. So there exists $g:L\to Y$ such that $f=g\varphi $.
\[
\begin{tikzcd}
0 \arrow[r] & M \arrow[r, "\varphi"] \arrow[d, "f"'] & L \arrow[r] \arrow[dl, "g", dashed] & N \arrow[r] & 0 \\
 & Y & & & 
\end{tikzcd}
\]
Now, let $T \in G(\X)^{\perp*}$. From the long exact sequence of $\operatorname{Ext}_*$ we have the exact sequence
\[\operatorname{Hom}_\A(L,T)\xrightarrow{\varphi ^*} \operatorname{Hom}_\A(M,T)\xrightarrow{}\operatorname{Ext}^1_*(N,T)\to \operatorname{Ext}^1_*(L,T)\]
But $\varphi ^*$ is an epimorphism from Lemma \ref{izur} and $\operatorname{Ext}^1_*(L,T)=0$, so $\operatorname{Ext}^1_*(N,T)=0$ and hence $N\in {}^{\perp*}(G(\X)^{\perp*})$. Setting $X^0=L$ and repeating the construction gives an augmented
coresolution
\[
0\to M\to X^0\to X^1\to\cdots
\]
that is left $\Y$-acyclic and hence right $\X$-acyclic.
Its deleted complex $X^\bullet_+=(0\to X^0\to X^1\to\cdots)$
has $Z^0(X^\bullet_+)=M$, and every cycle belongs to
${}^{\perp *}(G(\X)^{\perp *})$. For the other half of the resolution, take an $\X$ resolution of $M$
\[\cdots\to X^{-n}\to\cdots \to X^{-1}\to M\to 0\]
This is a right $\X$-acyclic complex with $X^{-n}\in \X$ for all $n>0$. We show inductively that every kernel belongs to ${}^{\perp*}(G(\X)^{\perp*})$. Indeed, for the first step, if $K=\operatorname{im}(X^{-2}\to X^{-1})$, the following is a right $\X$-acyclic complex
\[0\to K\to X^{-1}\to M\to 0\]
From Remark \ref{applicationsofbalancedpairs} we have $K\in {}^{\perp *}(G(\X)^{\perp *})$. 
Gluing the resolution of $M$ to $X^{\bullet}_+$ we get a right $\X$-acyclic complex
\[\begin{tikzcd}
	{X^\bullet :} & \cdots & {X^{-1}} && {X^0} & \cdots \\
	&&& M
	\arrow[from=1-2, to=1-3]
	\arrow[from=1-3, to=1-5]
	\arrow[from=1-3, to=2-4]
	\arrow[from=1-5, to=1-6]
	\arrow[from=2-4, to=1-5]
\end{tikzcd}\]
It remains to show that $X^\bullet$ is left $\X$-acyclic.
For every $j\in\mathbb Z$ there is a relative conflation
\[
0\to Z^j(X^\bullet)\to X^j\to Z^{j+1}(X^\bullet)\to0.
\]
All these cycles belong to ${}^{\perp *}(G(\X)^{\perp *})$.
If $A\in\X\subseteq G(\X)^{\perp *}$, then
$\operatorname{Ext}^1_*(Z^{j+1}(X^\bullet),A)=0$.
Thus every map $Z^j(X^\bullet)\to A$ extends to $X^j$.
Applying this in each degree proves that
$\operatorname{Hom}_{\A}(X^\bullet,A)$ is acyclic.
Consequently $M\in G(\X)$, and heredity follows from
\cref{applicationsofbalancedpairs}.
\end{proof}

\section{Relative purity and completeness}
\label{sec:set-relative-purity}

Section~3 supplies the hereditary cotorsion pair once the Kaplansky
condition is verified. We now turn to completeness for relative purity
determined by finitely presented modules. The main construction expresses
Gorenstein objects as direct summands of objects filtered by a set of
small modules. Periodic modules allow us to carry out this construction
while keeping track of relative exactness.

Throughout this section, $R$ is a ring and $\mathcal S$ is a set of finitely
presented left $R$-modules. Put
\[
\mathcal S_0=\mathcal S\cup\{R\},\qquad
\X_{\mathcal S}=\operatorname{Add}(\mathcal S_0),\qquad
G_0=\bigoplus_{S\in\mathcal S_0}S.
\]
Let $\mathcal E_{\mathcal S}$ consist of the short exact sequences which
remain exact under $\operatorname{Hom}_R(S,-)$ for every $S\in\mathcal S_0$.
Adjoining $R$ does not change this exact structure. All extension groups
$\operatorname{Ext}^i_*$ and orthogonals in this section are relative to
$\mathcal E_{\mathcal S}$. The notation $\operatorname{Filt}_{\mathcal E_{\mathcal S}}(\mathcal C)$
means transfinite extensions with factors in $\mathcal C$ and with all
filtration inclusions $\mathcal E_{\mathcal S}$-inflations.
The notation $\operatorname{Retr}(\mathcal C)$ denotes closure under
direct summands and isomorphisms.

\subsection{The balanced pair and the Kaplansky condition}

\begin{prop}\label{Sbalancedpair}
Let $\Y_{\mathcal S}$ be the class of $\mathcal E_{\mathcal S}$-injective
modules. Then $(\X_{\mathcal S},\Y_{\mathcal S})$ is an admissible balanced
pair inducing $\mathcal E_{\mathcal S}$, and
\[
\operatorname{Proj}(R\text{-}\mathrm{Mod},\mathcal E_{\mathcal S})
=\X_{\mathcal S}.
\]
Moreover, $(R\text{-}\mathrm{Mod},\mathcal E_{\mathcal S})$ is an efficient
exact category with generator $G_0$.
\end{prop}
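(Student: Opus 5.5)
The plan is to first treat $\mathcal E_{\mathcal S}$ as a projectively generated exact structure, and only then read off the balanced pair. For a module $M$, consider the canonical map
\[
\pi_M:\bigoplus_{S\in\mathcal S_0}\ \bigoplus_{h\in\operatorname{Hom}_R(S,M)} S\longrightarrow M .
\]
Its domain lies in $\X_{\mathcal S}$. Every $S\in\mathcal S_0$ is finitely presented, so $\operatorname{Hom}_R(S,-)$ commutes with direct sums, and every $X\in\operatorname{Add}(\mathcal S_0)$ is a summand of a sum of copies of members of $\mathcal S_0$. Hence $\pi_M$ is $\X_{\mathcal S}$-epic, i.e.\ an $\X_{\mathcal S}$-precover. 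Because $R\in\mathcal S_0$, $\pi_M$ is surjective, so $\X_{\mathcal S}$ is admissible. More generally, any $\X_{\mathcal S}$-epic map is surjective, since it is $R$-epic. It follows that the right $\X_{\mathcal S}$-acyclic short exact sequences are exactly the sequences in $\mathcal E_{\mathcal S}$, because $\operatorname{Hom}(\operatorname{Add}(\mathcal S_0),-)$-exactness reduces to $\operatorname{Hom}(S,-)$-exactness for $S\in\mathcal S_0$.

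That $\mathcal E_{\mathcal S}$ is an exact structure follows from the standard fact that the class of short exact sequences which are $\operatorname{Hom}(\mathcal C,-)$-exact for some class $\mathcal C$ is exact. It is closed under pushouts and pullbacks, and compositions of deflations are deflations by a diagram chase. The projectives: $\X_{\mathcal S}$ is clearly contained in $\operatorname{Proj}(\mathcal E_{\mathcal S})$. Conversely, if $P$ is $\mathcal E_{\mathcal S}$-projective, the conflation $0\to\ker\pi_P\to\bigoplus S\to P\to 0$ splits, so $P\in\operatorname{Add}(\mathcal S_0)=\X_{\mathcal S}$.

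For the injective side, I would use the standard result (Crawley-Boevey; alternatively, the dual of the Enochs--Jenda argument for relative purity) that $\mathcal E_{\mathcal S}$ has enough injectives. The concrete construction is as follows. $\mathcal E_{\mathcal S}$-exactness of a sequence is equivalent to exactness of the tensored sequence $\mathcal S^{\mathrm{tr}}\otimes_R-$ for Auslander--Bridger transposes of the modules in $\mathcal S_0$. Hence, for $M$, the map $M\to\prod_{T}\operatorname{Hom}_{\Z}(T\otimes_R M,\mathbb Q/\Z)$-type products of character modules, with $T$ ranging over the transposes, gives an $\mathcal E_{\mathcal S}$-inflation into an $\mathcal E_{\mathcal S}$-injective. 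This is the $\Y_{\mathcal S}$-preenvelope, and it is a monomorphism, so coadmissibility holds. With enough projectives and enough injectives in the exact category $\mathcal E_{\mathcal S}$, a short exact sequence is $\operatorname{Hom}(\X_{\mathcal S},-)$-exact if and only if it is a conflation, if and only if it is $\operatorname{Hom}(-,\Y_{\mathcal S})$-exact. Extending this degreewise to complexes via their cycle sequences shows that right $\X_{\mathcal S}$-acyclic complexes coincide with left $\Y_{\mathcal S}$-acyclic ones. This step needs care: I would show that both conditions are equivalent to the complex being acyclic with all cycle sequences in $\mathcal E_{\mathcal S}$. So the pair is balanced.

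Efficiency (in Šťovíček's sense) requires four properties. First, splitting idempotents, which is automatic in $R$-Mod. Second, transfinite compositions of inflations are inflations: a direct limit of $\mathcal E_{\mathcal S}$-inflations is an $\mathcal E_{\mathcal S}$-inflation because $\operatorname{Hom}(S,-)$ commutes with direct limits for finitely presented $S$ and direct limits in $\mathbf{Ab}$ are exact. Third, every object is small relative to inflations, which holds since each module is $\kappa$-presentable for some $\kappa$. Fourth, $G_0$ is a generator: $\pi_M$, rewritten as a deflation from a sum of copies of $G_0$, gives this, since each $S$ is a summand of $G_0$ and the precover is a deflation. I expect the main obstacle to be the injective side, namely showing that $\mathcal E_{\mathcal S}$ has enough injectives, together with the degreewise balance argument for unbounded complexes. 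The rest is formal.
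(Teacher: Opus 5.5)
Your proposal is correct and follows essentially the paper's route. The evaluation map $\bigoplus_{S\in\mathcal S_0}S^{(\operatorname{Hom}_R(S,M))}\to M$ gives precovers, admissibility and $\operatorname{Proj}(R\text{-}\mathrm{Mod},\mathcal E_{\mathcal S})=\X_{\mathcal S}$, and relative-purity injectives supply the other half: your transpose/character-module construction and your cycle-sequence proof of balance make explicit what the paper only cites. The efficiency axioms are checked the same way, except that you use $\kappa$-presentability where the paper factors maps through a stage of the chain.

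One small repair: the preenvelope should be the canonical map $M\to\prod_T (T^{+})^{\operatorname{Hom}_R(M,T^{+})}$ with $T^{+}=\operatorname{Hom}_{\Z}(T,\mathbb Q/\Z)$, and $T$ must range over the transposes together with $R_R$ for this map to be monic. The transposes alone need not separate points: for $R=\Z$ and $\mathcal S=\{\Z/p\}$ they send $\mathbb Q$ to $0$. Adding $R_R$ does not change the exact structure.
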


\begin{proof}
For every module $M$, the evaluation morphism
\[
\bigoplus_{S\in\mathcal S_0}S^{(\operatorname{Hom}_R(S,M))}
\longrightarrow M
\]
is epic and is surjective under each $\operatorname{Hom}_R(S,-)$.
Its domain belongs to $\X_{\mathcal S}$. Every object of
$\X_{\mathcal S}$ is $\mathcal E_{\mathcal S}$-projective, and this
evaluation splits whenever $M$ is $\mathcal E_{\mathcal S}$-projective.
This proves the asserted description of the projectives and also shows
that every $\X_{\mathcal S}$-precover is epic. Relative-purity theory
provides enough $\mathcal S_0$-pure-injectives and balances
$\operatorname{Hom}_R(-,-)$ by the relative projectives and injectives;
see \cite[Proposition~2.8 and Corollary~2.9]{ZarehDivaani2013}.
Thus the pair is admissible and induces the stated exact structure.

We verify efficiency in the sense of
\cite[Definition~3.4]{StovicekExactModels2014}.
The underlying abelian category is weakly idempotent complete.
Filtered colimits preserve $\mathcal E_{\mathcal S}$-conflations:
they are exact in $R\text{-}\mathrm{Mod}$, and
$\operatorname{Hom}_R(S,-)$ preserves them for every finitely presented
$S\in\mathcal S_0$. Consequently, transfinite composites of
$\mathcal E_{\mathcal S}$-inflations exist and are again inflations.
Every module $L$ is small relative to monomorphisms: for regular
$\theta>|L|$, every map from $L$ into a continuous union indexed by an
ordinal of cofinality at least $\theta$ factors through one stage, and
equality of such maps is detected there since all transition maps are
monic. Finally, the evaluation $G_0^{(\operatorname{Hom}_R(G_0,M))}\to M$
is an $\mathcal E_{\mathcal S}$-deflation. Indeed, every map $S\to M$
extends to $G_0$ by zero on the other summands. Thus $G_0$ is a generator
of the exact category.
\end{proof}

\begin{lem}\label{AddSKaplansky}
For every $R$-module $P$, the class $\operatorname{Add}(P)$ is
$\kappa$-Kaplansky whenever $\kappa$ is regular and
\[
\kappa>\max\{|R|,|P|,\aleph_0\}.
\]
If $\mathcal T$ is any set of finitely presented modules, then
$\operatorname{Add}(\mathcal T)$ is $\kappa$-Kaplansky for every
uncountable regular $\kappa$. In both assertions the small intermediate
submodule can be chosen to be a direct summand.
\end{lem}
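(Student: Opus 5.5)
The plan is to reduce both assertions to Kaplansky's theorem on direct summands of direct sums. In its general form (due to Kaplansky, see also Anderson--Fuller, Theorem~26.1) it says: if $\mu$ is an infinite cardinal, then every direct summand of a direct sum of $\le\mu$-generated modules is itself a direct sum of $\le\mu$-generated modules.

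\emph{First assertion.} Put $\mu=\max\{|R|,|P|,\aleph_0\}$, so $P$ is $\le\mu$-generated.
\begin{enumerate}
\item Let $F\in\operatorname{Add}(P)$. Then $F$ is a direct summand of some $P^{(I)}$.
\item By Kaplansky's theorem, $F=\bigoplus_{k\in K'}F_k$ with each $F_k$ being $\le\mu$-generated. Each $F_k$ is a summand of $F$, hence lies in $\operatorname{Add}(P)$.
\item Given a $<\kappa$-generated $M\subseteq F$, choose a generating set of $M$ of size $<\kappa$. Each generator has finite support in the decomposition, so $M\subseteq N:=\bigoplus_{k\in K}F_k$ for some $K\subseteq K'$ with $|K|<\kappa$.
\item Then $N$ is a direct summand of $F$ with $F/N\cong\bigoplus_{k\notin K}F_k\in\operatorname{Add}(P)$, and $N\in\operatorname{Add}(P)$.
\item $N$ is $<\kappa$-presented. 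Each $F_k$ is a quotient of $R^{(\mu)}$, so $|F_k|\le\mu$. Hence $|N|\le|K|\cdot\mu<\kappa$, using regularity of $\kappa$ and $\kappa>\mu$. Any module of cardinality $<\kappa$ over a ring with $|R|<\kappa$ has a presentation with $<\kappa$ generators and $<\kappa$ relations.
\end{enumerate}

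\emph{Second assertion.} Let $\mathcal T$ be a set of finitely presented modules and $\kappa$ uncountable regular. We no longer have a bound on $|R|$, so cardinality cannot be used, and we work with presentations directly.
\begin{enumerate}
\item Every $F\in\operatorname{Add}(\mathcal T)$ is a summand of a direct sum of finitely presented, hence countably generated, modules. By Kaplansky's theorem with $\mu=\aleph_0$, $F=\bigoplus_{k}F_k$ with each $F_k$ countably generated and in $\operatorname{Add}(\mathcal T)$.
\item As before, $M\subseteq N=\bigoplus_{k\in K}F_k$ with $|K|<\kappa$. Then $N$ is a summand of $F$, and both $N$ and $F/N$ lie in $\operatorname{Add}(\mathcal T)$.
\item It remains to see that $N$ is $<\kappa$-presented, and it suffices to do this for each $F_k$. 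A $<\kappa$ direct sum of $<\kappa$-presented modules is $<\kappa$-presented because $\kappa$ is regular and uncountable.
\item Write $F_k\oplus F_k'=\bigoplus_{i\in I}T_i$ with $T_i\in\mathcal T$. The countably many generators of $F_k$ lie in $Q=\bigoplus_{i\in J}T_i$ for some countable $J\subseteq I$.
\item By the modular law, $Q=F_k\oplus(Q\cap F_k')$. So $F_k$ is a summand of $Q$, which is countably presented since it is a countable sum of finitely presented modules.
\item A direct summand of a countably presented module is countably presented: it is the cokernel of an idempotent endomorphism of $Q$. Hence $F_k$ is countably presented, in particular $<\kappa$-presented.
\end{enumerate}

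The main point to handle carefully is the appeal to Kaplansky's theorem in the generality of $\le\mu$-generated summands. The rest is bookkeeping of cardinalities, and in each case the intermediate submodule $N$ produced is a direct summand of $F$, as required.
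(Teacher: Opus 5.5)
Your proof is correct, but it takes a different route from the paper. You apply Kaplansky's decomposition theorem for an arbitrary infinite cardinal $\mu$; that form holds, since Kaplansky's transfinite argument works verbatim for any such $\mu$. This splits $F$ into $\le\mu$-generated (resp.\ countably generated) summands, and you then take $N$ to be a subsum containing the supports of the generators of $M$.

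The paper instead performs a single closure step of the kind used inside Kaplansky's proof. It writes $X=\operatorname{Im}e$ for an idempotent $e$ on $Q=\bigoplus_{i\in I}P_i$ and records the supports $\sigma(i)$ of $e(P_i)$. It then closes the supports of the generators under $\sigma$ in $\omega$ steps, obtaining $J$ with $|J|<\kappa$ and $e(Q_J)\subseteq Q_J$. Finally it takes $V=e(Q_J)$, which is a retract of $X$ via $e\pi_J|_X$.

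The paper's argument is self-contained, and in the second assertion it gets presentability directly. There $V$ is a retract of $Q_J$, a sum of fewer than $\kappa$ finitely presented modules. You need an extra step instead: each countably generated summand $F_k$ sits as a summand of a countable subsum, hence is countably presented. Your modular-law argument handles this correctly.

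In exchange, your approach yields more structure. Every object of $\operatorname{Add}(P)$ or $\operatorname{Add}(\mathcal T)$ becomes a direct sum of small pieces, so both $N$ and $F/N$ are visibly subsums of one fixed decomposition, whatever $M$ is.
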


\begin{proof} Let $X$ belong to either $\operatorname{Add}(P)$ in the first case or $\operatorname{Add}(\mathcal T)$ in the second. Write $X=\operatorname{Im}e$ for an idempotent on
$Q=\bigoplus_{i\in I}P_i$, where all $P_i=P$ in the first assertion and
$P_i\in\mathcal T$ in the second. For each $i$, choose
$\sigma(i)\subseteq I$ such that
\[
e(P_i)\subseteq\bigoplus_{j\in\sigma(i)}P_j.
\]
In the first case $|\sigma(i)|\leq\max\{|P|,\aleph_0\}$; in the
second case $\sigma(i)$ can be finite, since $P_i$ is finitely generated.
Given a submodule $U\subseteq X$ generated by fewer than $\kappa$
elements, let $J_0$ contain their supports and set
\[
J_{n+1}=J_n\cup\bigcup_{i\in J_n}\sigma(i),\qquad
J=\bigcup_{n<\omega}J_n.
\]
The cardinal assumptions give $|J|<\kappa$. Put
$Q_J=\bigoplus_{j\in J}P_j$ and $V=e(Q_J)$.
Then $e(Q_J)\subseteq Q_J$, $U\subseteq V$, and $V$ is a retract of
$Q_J$. If $\pi_J:Q\to Q_J$ is the coordinate projection, then
$e\pi_J|_X:X\to V$ is the identity on $V$. Thus $V$ is also a direct
summand of $X$, and both $V$ and $X/V$ belong to the specified class.
In the first case $|V|<\kappa$; since $|R|<\kappa$, a free presentation
of $V$ has fewer than $\kappa$ generators and relations.
In the second case $Q_J$ is a coproduct of fewer than $\kappa$ finitely
presented modules, hence is $\kappa$-presentable, as is its retract $V$.
\end{proof}

\begin{corol}\label{Shereditarypair}
For every set $\mathcal S$ of finitely presented modules,
\[
\bigl(G(\X_{\mathcal S}),G(\X_{\mathcal S})^{\perp *}\bigr)
\]
is a hereditary $*$-cotorsion pair in
$(R\text{-}\mathrm{Mod},\mathcal E_{\mathcal S})$.
\end{corol}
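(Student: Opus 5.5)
This corollary should follow by checking the hypotheses of \cref{cotorsion pair} in the setting $\A=R\text{-}\mathrm{Mod}$ with the balanced pair $(\X_{\mathcal S},\Y_{\mathcal S})$. The plan is to verify, in order, the three ingredients that theorem requires, and then read off the conclusion relative to $\mathcal E_{\mathcal S}$.

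First, $R\text{-}\mathrm{Mod}$ is locally presentable: it is locally finitely presentable, with the finitely presented modules as a generating set. Second, \cref{Sbalancedpair} gives that $(\X_{\mathcal S},\Y_{\mathcal S})$ is an admissible balanced pair. The same proposition shows that the induced exact structure, namely the right $\X_{\mathcal S}$-acyclic short exact sequences, is exactly $\mathcal E_{\mathcal S}$. Consequently $\operatorname{Ext}^1_*$ coincides with $\operatorname{Ext}^1_{\mathcal E_{\mathcal S}}$, and the orthogonals in \cref{cotorsion pair} are the orthogonals in $(R\text{-}\mathrm{Mod},\mathcal E_{\mathcal S})$. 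Third, $\X_{\mathcal S}=\operatorname{Add}(\mathcal S_0)$ is closed under direct summands and finite (indeed arbitrary) direct sums by the definition of $\operatorname{Add}$.

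For the Kaplansky hypothesis, I would apply the second assertion of \cref{AddSKaplansky} with $\mathcal T=\mathcal S_0$. This is legitimate because $\mathcal S_0=\mathcal S\cup\{R\}$ is a set of finitely presented modules. The lemma then says that $\operatorname{Add}(\mathcal S_0)$ is $\kappa$-Kaplansky for every uncountable regular $\kappa$, so we may take $\lambda=\aleph_1$ in \cref{cotorsion pair}.

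The theorem then yields that $(G(\X_{\mathcal S}),G(\X_{\mathcal S})^{\perp *})$ is a hereditary $*$-cotorsion pair. The only point requiring care, and hence the main obstacle, is matching definitions. The $\kappa$-Kaplansky condition asks for $N$ to be $<\kappa$-presented, while \cref{AddSKaplansky} produces a $\kappa$-presentable retract $V$ of a coproduct of fewer than $\kappa$ finitely presented modules. In module categories these notions agree: such a coproduct has a presentation with fewer than $\kappa$ generators and relations, and so does any retract of it, since $\kappa$ is regular and uncountable. With this identification the hypotheses of \cref{cotorsion pair} are met verbatim, and the corollary follows.
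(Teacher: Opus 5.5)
Your proposal is correct and matches the paper's proof. The paper applies \cref{cotorsion pair} using \cref{Sbalancedpair}, \cref{AddSKaplansky} (implicitly with $\mathcal T=\mathcal S_0$) and local presentability of $R\text{-}\mathrm{Mod}$, which is exactly your route. Your extra check that the retract $V$ is $<\kappa$-presented is sound and only makes explicit a point the paper leaves implicit.
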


\begin{proof}
Apply \cref{cotorsion pair}, using \cref{Sbalancedpair,AddSKaplansky}
and local presentability of $R\text{-}\mathrm{Mod}$.
\end{proof}

\subsection{Periodic modules and small filtrations}

\begin{defin}\label{strongGSdefinition}
A module $M$ is $\mathcal S$-periodic if it admits an
$\mathcal E_{\mathcal S}$-conflation
\[
0\longrightarrow M\longrightarrow Q\longrightarrow M\longrightarrow0,
\qquad Q\in\X_{\mathcal S}.
\]
We denote this class by $\operatorname{Per}_{\mathcal S}(\X_{\mathcal S})$.
Such an $M$ is strongly Gorenstein $\X_{\mathcal S}$ if, in addition,
$M\in{}^{\perp *}\X_{\mathcal S}$.
\end{defin}

\begin{lem}\label{Speriodicnormalform}
Every $\mathcal S$-periodic module is a direct summand of an
$\mathcal S$-periodic module $N$ admitting a conflation
\begin{equation}\label{eq:Snormalform}
0\longrightarrow N\longrightarrow
\bigoplus_{\delta\in\Delta}S_\delta
\longrightarrow N\longrightarrow0,
\qquad S_\delta\in\mathcal S_0.
\end{equation}
Every strongly Gorenstein $\X_{\mathcal S}$-module belongs to
$G(\X_{\mathcal S})$, and every object of $G(\X_{\mathcal S})$ is a
direct summand of a strongly Gorenstein module with a conflation of
the form \eqref{eq:Snormalform}.
\end{lem}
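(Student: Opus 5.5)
For the first assertion I would use an Eilenberg-type swindle. Let $M$ be $\mathcal S$-periodic with conflation $0\to M\to Q\to M\to0$ and $Q\in\X_{\mathcal S}=\operatorname{Add}(\mathcal S_0)$. Then $Q\oplus Q'\cong\bigoplus_{\delta\in\Delta}S_\delta$ for some $Q'$, and I may enlarge $Q'$ by adding $\bigl(\bigoplus_{S\in\mathcal S_0}S\bigr)^{(\omega)}$. Put $P=Q'^{(\omega)}$, a module of the same form, with $P\oplus P\cong P$. Adding the split conflation $0\to P\xrightarrow{(1,0)}P\oplus P\xrightarrow{(0,1)}P\to0$ to the given one yields
\[
0\to M\oplus P\to Q\oplus P\oplus P\to M\oplus P\to 0 .
\]
It is a conflation because $\mathcal E_{\mathcal S}$ is closed under finite direct sums. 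The middle term is $Q\oplus Q'\oplus P$, and this is isomorphic to a direct sum of copies of modules in $\mathcal S_0$. So $N=M\oplus P$ is $\mathcal S$-periodic, has the form \eqref{eq:Snormalform}, and contains $M$ as a direct summand.

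For the second assertion, let $M$ be strongly Gorenstein with conflation $0\to M\xrightarrow{\iota}Q\xrightarrow{\pi}M\to0$. I splice copies of this conflation to obtain the complex $X_\bullet=\cdots\to Q\xrightarrow{\iota\pi}Q\xrightarrow{\iota\pi}Q\to\cdots$, whose cycles all equal $M$. Right $\X_{\mathcal S}$-acyclicity holds because each piece is an $\mathcal E_{\mathcal S}$-conflation, and these are exactly the right $\X_{\mathcal S}$-acyclic short exact sequences. For left acyclicity take $A\in\X_{\mathcal S}$. Since $\operatorname{Ext}^1_*(M,A)=0$, every map $M\to A$ extends along $\iota$, which is the exactness of $\operatorname{Hom}(X_\bullet,A)$ at each degree, exactly as at the end of the proof of \cref{cotorsion pair}. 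Hence $M=Z_0(X_\bullet)\in G(\X_{\mathcal S})$.

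For the third assertion, take $M=Z_0(X_\bullet)\in G(\X_{\mathcal S})$ and use the standard Bennis--Mahdou construction. Let $Q=\bigoplus_{n\in\Z}X_n$ and $N=\bigoplus_{n\in\Z}Z_n(X_\bullet)$. The direct sum of the cycle conflations $0\to Z_n\to X_n\to Z_{n-1}\to0$, after reindexing, gives $0\to N\to Q\to N\to0$. Infinite direct sums of $\mathcal E_{\mathcal S}$-conflations remain conflations, since $\operatorname{Hom}_R(S,-)$ commutes with direct sums for finitely presented $S$. Equivalently, the shift-sum complex $\bigoplus_n X_\bullet[n]$ is again right and left $\X_{\mathcal S}$-acyclic, because $\operatorname{Hom}(\bigoplus,A)=\prod\operatorname{Hom}$ and products are exact. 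This gives $N\in{}^{\perp*}\X_{\mathcal S}$ by the Remark that $\X\subseteq G(\X)^{\perp_*}$, using $N\in G(\X_{\mathcal S})$. Alternatively, $\operatorname{Ext}^1_*(\bigoplus Z_n,A)=\prod\operatorname{Ext}^1_*(Z_n,A)=0$. So $N$ is strongly Gorenstein and has $M=Z_0$ as a summand. Finally I apply the first paragraph to $N$ and must check that $N\oplus P$ is still strongly Gorenstein. Here $P\in\X_{\mathcal S}$ is relatively projective, so $\operatorname{Ext}^1_*(P,-)=0$, and $\operatorname{Ext}^1_*$ is additive.

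The step I expect to need the most care is the infinite direct sum of conflations. I would verify it directly: a direct sum of short exact sequences is exact, and $\operatorname{Hom}_R(S,\bigoplus-)=\bigoplus\operatorname{Hom}_R(S,-)$ for finitely presented $S$, which preserves surjectivity. I also need to confirm that $\operatorname{Ext}^1_*$ turns direct sums in the first variable into products. This follows by computing it via a direct sum of $\X_{\mathcal S}$-resolutions, which is again an $\X_{\mathcal S}$-resolution by the same compactness argument.
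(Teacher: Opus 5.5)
Your second and third parts follow the paper's proof. You splice the periodic conflation and use $\operatorname{Ext}^1_*(M,-)$-vanishing to get left acyclicity. You then sum the cycle conflations of a two-sided resolution, where finitely presented tests make the sum a conflation and products give the orthogonality. Finally you note that adding a relative projective summand preserves strong Gorensteinness.

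The gap is in the swindle of your first paragraph. You call $P=Q'^{(\omega)}$ ``a module of the same form'', and your claim that the middle term $Q\oplus Q'\oplus P$ is a direct sum of copies of modules in $\mathcal S_0$ rests on this. But $P$ need not be such a direct sum, even after enlarging $Q'$ by $G_0^{(\omega)}$. Take $R=\mathbb Z\times\mathbb Z$ and $\mathcal S=\{R\}$, so that $\X_{\mathcal S}$ consists of the projectives and $G_0=R$. Let $M=(\mathbb Z\times 0)^{(\aleph_1)}$ with its split periodic conflation, so $Q\cong M$. The enlarged complement $Q'=(0\times\mathbb Z)^{(\aleph_1)}\oplus R^{(\omega)}$ is a legitimate choice, since $Q\oplus Q'$ is free. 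Then $P\cong(0\times\mathbb Z)^{(\aleph_1)}\oplus R^{(\omega)}$, so $e_1P$ has rank $\aleph_0$ while $e_2P$ has rank $\aleph_1$. Since a free $R$-module has $e_1$- and $e_2$-parts of equal rank, $P$ is not free.

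Your conclusion is nevertheless true, but it needs an argument you did not give. Because of the enlargement, $A'=Q\oplus Q'$ contains every $S\in\mathcal S_0$ with infinite multiplicity, so counting multiplicities gives $A'\cong A'^{(\omega)}$. Hence $A'\oplus P\cong Q^{(\omega)}\oplus Q'^{(\omega)}\oplus Q'^{(\omega)}\cong A'^{(\omega)}\cong A'$.

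The simpler repair is the paper's choice $P=(Q\oplus Q')^{(\omega)}$, with no enlargement needed. This $P$ is visibly a direct sum of modules in $\mathcal S_0$ and satisfies $P\oplus P\cong P$. The swindle $Q\oplus(Q'\oplus Q)^{(\omega)}\cong(Q\oplus Q')^{(\omega)}$ then gives $Q\oplus P\oplus P\cong P$. With this change, the rest of your argument goes through as written, including strong Gorensteinness of $N\oplus P$ via relative projectivity of $P$.
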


\begin{proof}
For a periodic conflation with middle term $Q$, choose $Q'$ such that
$Q\oplus Q'=A=\bigoplus_{i\in I}S_i$, and put $B=A^{(\mathbb N)}$.
Reindexing coproducts gives $B\cong B\oplus B\cong Q\oplus B$.
Adding the split conflation $0\to B\to B\oplus B\to B\to0$ to the
given one produces a periodic conflation for $M\oplus B$ with middle
term $Q\oplus B\oplus B\cong B$.
If $M$ is strongly Gorenstein, so is $M\oplus B$, because
$B\in\X_{\mathcal S}$ is relative projective.

Splicing the conflation of a strongly Gorenstein module gives a right $\X_{\mathcal S}$-acyclic complex. Its left
$\X_{\mathcal S}$-acyclicity follows from the $\operatorname{Ext}^1_*$-vanishing of the strongly Gorenstein module.
Conversely, let $T$ be a right and left $\X_{\mathcal S}$-acyclic
complex with terms in $\X_{\mathcal S}$. Summing its cycle conflations
and reindexing gives
\[
0\longrightarrow\bigoplus_{n\in\mathbb Z}Z^n(T)
\longrightarrow\bigoplus_{n\in\mathbb Z}T^n
\longrightarrow\bigoplus_{n\in\mathbb Z}Z^n(T)
\longrightarrow0.
\]
This is an $\mathcal E_{\mathcal S}$-conflation, since every test is
finitely presented. For $X\in\X_{\mathcal S}$, each map
$Z^n(T)\to X$ extends to $T^n$ by left acyclicity. Taking products of
these surjections of Hom groups proves the required
$\operatorname{Ext}^1_*$-vanishing for $\bigoplus_n Z^n(T)$.
Every cycle is a direct summand of this module; the preceding argument supplies the asserted normal form.
\end{proof}

The next two lemmas adapt the periodic purification and filtration
constructions of \cite[Lemma~5.1 and Theorem~5.2]{CortesIzurdiaga2025}.
A small pure submodule of a periodic module need not, by itself,
give a periodic subobject of the chosen conflation. We therefore
construct the small submodule together with a subsum of
the middle term, preserving relative exactness for both the subobject
and the quotient.
For an infinite regular $\lambda$, a submodule is $\lambda$-pure if
every system of fewer than $\lambda$ linear equations in fewer than
$\lambda$ unknowns, with parameters in that submodule and solvable
in the ambient module, is solvable in the submodule.

\begin{lem}\label{relativepurificationS}
Let $\lambda$ be infinite regular, and let
\begin{equation}\label{eq:puritybound}
\mu=\mu^{<\lambda}\geq
\max\{|R|,|\mathcal S|,\lambda,\aleph_0\}.
\end{equation}
Suppose that
\[
\mathbb D:\quad
0\longrightarrow N\xrightarrow{i}Q=\bigoplus_{\delta\in\Delta}S_\delta
\xrightarrow{\pi}N\longrightarrow0,
\qquad S_\delta\in\mathcal S_0,
\]
is an $\mathcal E_{\mathcal S}$-conflation. For any $U\subseteq N$
with $|U|\leq\mu$, there are $E\subseteq\Delta$ and a submodule
$A\subseteq N$ such that $|E|,|A|\leq\mu$, $U\subseteq A$,
$A$ is $\lambda$-pure in $N$, and there is a commutative diagram
\[
\begin{tikzcd}[column sep=small]
0\arrow[r]&A\arrow[r,"i|_A"]\arrow[d,hook]&Q_E\arrow[r,"\pi|_{Q_E}"]\arrow[d,hook]&A\arrow[r]\arrow[d,hook]&0\\
0\arrow[r]&N\arrow[r,"i"]\arrow[d,two heads]&Q\arrow[r,"\pi"]\arrow[d,two heads]&N\arrow[r]\arrow[d,two heads]&0\\
0\arrow[r]&N/A\arrow[r,"\bar i"]&Q/Q_E\arrow[r,"\bar\pi"]&N/A\arrow[r]&0,
\end{tikzcd}
\qquad Q_E=\bigoplus_{\delta\in E}S_\delta.
\]
Its rows and its three nonzero columns are
$\mathcal E_{\mathcal S}$-conflations, with zeros understood at the
ends of the columns. The middle column splits.
\end{lem}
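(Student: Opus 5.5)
We construct $A$ and $E$ together by a closure argument of length $\omega$ (or $\lambda$, if $\lambda>\aleph_0$ requires it), in the style of \cite[Lemma~5.1]{CortesIzurdiaga2025}. We build increasing chains $A_0\subseteq A_1\subseteq\cdots\subseteq N$ and $E_0\subseteq E_1\subseteq\cdots\subseteq\Delta$, each of size at most $\mu$, and close under four operations at each step.

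The four operations are as follows.
\begin{enumerate}
\item[(a)] Support closure. For $a\in A_n$, add to $E_{n+1}$ the finite support of $i(a)$ in $Q$.
\item[(b)] Preimage closure. For $\delta\in E_n$ and each of the finitely many generators $x$ of $S_\delta$, add $\pi(x)$ to $A_{n+1}$.
\item[(c)] Purity. For every system of fewer than $\lambda$ equations with parameters in $A_n$ that is solvable in $N$, add a solution to $A_{n+1}$.
\item[(d)] Relative exactness. For every $S\in\mathcal S_0$ and every $h\colon S\to N$ with image generated over $A_n$ in a controlled way, add data making $h$ liftable. Concretely, for each morphism $h\colon S\to A_n$, choose a lift $\tilde h\colon S\to Q$ through $\pi$ (it exists since $\mathbb D$ is $\mathcal E_{\mathcal S}$-exact) and add the support of $\tilde h(S)$ to $E_{n+1}$.
\end{enumerate}
Cardinality stays $\le\mu$ at every stage. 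In (c) there are at most $\mu^{<\lambda}=\mu$ systems. In (d) each $S$ is finitely presented, so there are at most $|A_n|^{<\omega}\le\mu$ maps, and $|\mathcal S|\le\mu$. We set $A=\bigcup A_n$, $E=\bigcup E_n$, and $A_0=U$. If $\lambda$ is uncountable, we take the union along $\lambda$ instead, so that a $<\lambda$ system with parameters in $A$ already has parameters in some stage; the bound $\mu^{<\lambda}=\mu$ keeps this within size $\mu$.

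Next we check the top row. By (a), $i(A)\subseteq Q_E$. By (b), $\pi(Q_E)\subseteq A$, and in fact $\pi(Q_E)=A$ by the lifts in (d) applied to $S=R$. We also need $i(A)=\ker(\pi|_{Q_E})$. Take $q\in Q_E$ with $\pi(q)=0$, so $q=i(n)$ for some $n\in N$; we must show $n\in A$. The coordinates of $q$ give a finite system of equations over $A$ (expressing $q$ in terms of generators of the $S_\delta$) which $n$ satisfies in $N$. Here $\lambda$-purity of $A$ alone only gives a solution in $A$, not $n$ itself, and I expect this to be the main obstacle. The fix is to add a fifth closure (e): for $q$ in the finitely generated pieces of $Q_{E_n}$ with $\pi(q)=0$, add $i^{-1}(q)$ to $A_{n+1}$. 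This keeps the bounds. With (e) the top row is exact. It is $\mathcal E_{\mathcal S}$-exact because every $S\to A$ has image in some $A_n$ (since $S$ is finitely generated), so it lifts to $Q_E$ by (d).

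Finally we handle the columns and bottom row. The middle column splits because $Q_E$ is a subsum. For the outer columns, $A$ is $\lambda$-pure, in particular pure in $N$, hence $\mathcal E_{\mathcal S}$-pure since tests are finitely presented. The bottom row is exact by the $3\times3$ lemma applied to the top two rows. For relative exactness of the bottom row, a map $S\to N/A$ lifts to $N$ by the column, then to $Q$ via $\mathbb D$, and then projects to $Q/Q_E$. Equivalently, one can apply the $3\times3$ lemma in the exact category $(R\text{-}\mathrm{Mod},\mathcal E_{\mathcal S})$, using that the first two rows and the columns are conflations.
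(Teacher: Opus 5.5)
Your proposal is correct and follows essentially the paper's proof: a closure of length $\lambda$ builds $A$ and $E$ together, your added step (e) is exactly the paper's inclusion of $K_{E_\alpha}=i^{-1}(Q_{E_\alpha})$ at each successor stage, and your checks of the rows and columns match the paper's (including the $3\times3$ lemma together with purity-lifting for the bottom row). The only difference is cosmetic: you obtain $\lambda$-purity by adding solutions of systems directly, using $\mu^{<\lambda}=\mu$, whereas the paper invokes \cite[Lemma~4.1]{CortesIzurdiaga2025} to choose a $\lambda$-pure $B_{\alpha+1}$ at each step (and you should take $A_0$ to be the submodule generated by $U$).
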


\begin{proof}
For $D\subseteq\Delta$, set $K_D=i^{-1}(Q_D)$.
If $|D|\leq\mu$, then $|Q_D|,|K_D|\leq\mu$, since the tests are
finitely generated and $|R|\leq\mu$. If $|C|\leq\mu$, there are at
most $\mu$ maps $S\to C$ with $S\in\mathcal S_0$.
We also use \cite[Lemma~4.1]{CortesIzurdiaga2025}: every submodule of
cardinality at most $\mu$ is contained in a $\lambda$-pure submodule
of cardinality at most $\mu$. Its stated bound by $\mu$ generators
is a cardinality bound here because $|R|\leq\mu$.

Construct increasing families $(B_\alpha)_{\alpha<\lambda}$ in $N$
and $(E_\alpha)_{\alpha<\lambda}$ in $\Delta$, all of cardinality
at most $\mu$. Start with $B_0=\langle U\rangle$, $E_0=\varnothing$.
At a successor step, choose a $\lambda$-pure submodule
$B_{\alpha+1}\subseteq N$ of cardinality at most $\mu$ containing
\[
B_\alpha+\pi(Q_{E_\alpha})+K_{E_\alpha}.
\]
Enlarge $E_\alpha$ to $E_{\alpha+1}$, still of cardinality at most
$\mu$, so that
\[
B_{\alpha+1}\subseteq\pi(Q_{E_{\alpha+1}}),\qquad
i(B_{\alpha+1})\subseteq Q_{E_{\alpha+1}},
\]
and every map $S\to B_{\alpha+1}$, $S\in\mathcal S_0$, has a chosen
lift to $Q_{E_{\alpha+1}}$ through $\pi$.
For the first two conditions, collect supports of chosen preimages
and of the elements of $i(B_{\alpha+1})$.
For the last, lift through $\pi$ and collect the supports of the
images of finite generating sets of the tests. The preceding counting
bounds justify all these choices. At limits take unions.

Put $A=\bigcup_{\alpha<\lambda}B_\alpha$ and
$E=\bigcup_{\alpha<\lambda}E_\alpha$. Then $|A|,|E|\leq\mu$.
Since elements of a direct sum have finite support,
$Q_E=\bigcup_{\alpha<\lambda}Q_{E_\alpha}$.
The inclusions $B_{\alpha+1}\subseteq\pi(Q_{E_{\alpha+1}})$
and $\pi(Q_{E_\alpha})\subseteq B_{\alpha+1}$ give
$A=\pi(Q_E)$. Also $i(B_{\alpha+1})\subseteq Q_{E_{\alpha+1}}$
implies $A\subseteq i^{-1}(Q_E)$. Conversely, if $i(x)\in Q_E$,
then $i(x)\in Q_{E_\alpha}$ for some $\alpha$, so
$x\in K_{E_\alpha}\subseteq B_{\alpha+1}\subseteq A$.
Thus
\[
A=\pi(Q_E)=i^{-1}(Q_E).
\]
Since the original row is exact, these identities prove exactness
of the upper row. Every map $S\to A$ has image in some
$B_{\alpha+1}$, by finite generation of $S$, and hence lifts to
$Q_E$. Thus the upper row is $\mathcal E_{\mathcal S}$-exact.
Regularity of $\lambda$ places the parameters of any system of fewer
than $\lambda$ equations in some $B_{\alpha+1}$. Its $\lambda$-purity
provides a solution there. Hence $A$ is $\lambda$-pure in $N$.

The quotient row is exact by the $3\times3$ lemma. A map
$S\to N/A$ lifts to $N$, since $A\subseteq N$ is pure and $S$ is
finitely presented, and then lifts to $Q$ by the middle row.
Passing to $Q/Q_E$ proves relative exactness of the quotient row.
Purity of $A\subseteq N$ and the splitting of the middle column
prove the assertion about the columns.
\end{proof}

\begin{lem}\label{Speriodicfiltration}
Under \eqref{eq:puritybound}, every conflation
\[
\mathbb E:\quad 0\to M\xrightarrow{i}Q\to M\to0
\]
of the form \eqref{eq:Snormalform} has a continuous filtration
by periodic subobjects
\[
\mathbb E_\alpha:\quad
0\longrightarrow M_\alpha\longrightarrow Q_\alpha
\longrightarrow M_\alpha\longrightarrow0
\qquad(\alpha\leq\tau)
\]
such that $M_0=Q_0=0$, $M_\tau=M$, and:
\begin{enumerate}[(i)]
\item $Q_\alpha$ is a coordinate subsum of the original middle term;
\item $\mathbb E_\alpha$ and $\mathbb E/\mathbb E_\alpha$ are
$\mathcal E_{\mathcal S}$-conflations, and $M_\alpha\to M$ is an
$\mathcal E_{\mathcal S}$-inflation;
\item $M_{\alpha+1}/M_\alpha$ is $\mathcal S$-periodic, has
cardinality at most $\mu$, and is $\lambda$-pure in $M/M_\alpha$.
\end{enumerate}
In particular, this is an $\mathcal E_{\mathcal S}$-filtration of $M$
by small periodic modules. 
\end{lem}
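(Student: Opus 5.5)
We build the filtration by transfinite recursion, applying \cref{relativepurificationS} at each successor step to the current quotient conflation $\mathbb E/\mathbb E_\alpha$. Well-order the underlying set of $M$ as $\{m_\beta\}_{\beta<\tau'}$. Put $M_0=Q_0=0$.

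\textbf{Successor step.} Suppose $\mathbb E_\alpha$ has been built, with $Q_\alpha=Q_{D_\alpha}$ a coordinate subsum for some $D_\alpha\subseteq\Delta$, and with $\mathbb E/\mathbb E_\alpha$ an $\mathcal E_{\mathcal S}$-conflation. Then the quotient
\[
0\to M/M_\alpha\to Q/Q_\alpha\cong\bigoplus_{\delta\in\Delta\setminus D_\alpha}S_\delta\to M/M_\alpha\to0
\]
again has the form required by \cref{relativepurificationS}. If $M_\alpha\neq M$, apply that lemma with $U$ the image of the least $m_\beta\notin M_\alpha$. This produces $A\subseteq M/M_\alpha$ and $E\subseteq\Delta\setminus D_\alpha$, both of cardinality at most $\mu$, with the following properties:
\begin{itemize}
\item[(a)] the top row $0\to A\to Q_E\to A\to0$ is an $\mathcal E_{\mathcal S}$-conflation, so $A$ is $\mathcal S$-periodic;
\item[(b)] $A$ is $\lambda$-pure in $M/M_\alpha$;
\item[(c)] the quotient row is an $\mathcal E_{\mathcal S}$-conflation of the same normal form;
\item[(d)] the columns are conflations.
\end{itemize}
Let $M_{\alpha+1}$ be the preimage of $A$ in $M$, and set $D_{\alpha+1}=D_\alpha\cup E$, so $Q_{\alpha+1}=Q_{D_{\alpha+1}}$. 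We must check that $\mathbb E_{\alpha+1}$ is a subconflation of $\mathbb E$. It is exact, since it is an extension of the exact rows $\mathbb E_\alpha$ and the top row of the lemma, via the $3\times 3$ lemma. Because $\mathbb E/\mathbb E_{\alpha+1}\cong(\mathbb E/\mathbb E_\alpha)/(\text{top row})$, which is the quotient row, (ii) holds at $\alpha+1$, and (iii) follows from (a) and (b).

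\textbf{Limit step.} At limit ordinals, take unions. Since elements of direct sums have finite support, $Q_\alpha=\bigcup_{\beta<\alpha}Q_\beta$ is again a coordinate subsum. Exactness of $\mathbb E_\alpha$ and of $\mathbb E/\mathbb E_\alpha$ then follows from exactness of direct limits. Relative exactness passes to the limit as well: every $S\in\mathcal S_0$ is finitely presented, so $\operatorname{Hom}_R(S,-)$ commutes with these directed unions and with the quotients by them. This is the same argument used in \cref{Sbalancedpair} to show that filtered colimits preserve $\mathcal E_{\mathcal S}$-conflations. The process terminates at some ordinal $\tau$ with $M_\tau=M$, because each step adds a new $m_\beta$.

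\textbf{Relative exactness of $M_\alpha\to M$.} It remains to show that each inclusion $M_\alpha\to M$ is an $\mathcal E_{\mathcal S}$-inflation, that is, that $0\to M_\alpha\to M\to M/M_\alpha\to0$ is $\mathcal E_{\mathcal S}$-exact. I expect this to be the main obstacle, and the plan is to read it off from the diagram rather than from purity alone. Given a map $S\to M/M_\alpha$, lift it through the $\mathcal E_{\mathcal S}$-deflation $Q/Q_\alpha\to M/M_\alpha$ to a map $S\to Q/Q_\alpha$. Next lift this to $Q$, using that $Q\to Q/Q_\alpha$ is a split coordinate projection. Then push down along $\pi\colon Q\to M$. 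The composite is a lift of the original map $S\to M/M_\alpha$ to $M$, since the quotient map $M\to M/M_\alpha$ is compatible with $\pi$ and $Q\to Q/Q_\alpha$. Hence $\operatorname{Hom}_R(S,M)\to\operatorname{Hom}_R(S,M/M_\alpha)$ is surjective, which is the required relative exactness. The same argument shows that the successive inclusions $M_\alpha\subseteq M_{\alpha+1}$ are $\mathcal E_{\mathcal S}$-inflations, which gives the final assertion that this is an $\mathcal E_{\mathcal S}$-filtration of $M$ by small periodic modules.
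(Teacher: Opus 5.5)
Your proof is correct and follows the paper's recursion: apply \cref{relativepurificationS} to $\mathbb E/\mathbb E_\alpha$ at successor steps and take unions at limits. The one local difference is how you show $M_\alpha\to M$ is an $\mathcal E_{\mathcal S}$-inflation: you read it off directly from relative exactness of $\mathbb E/\mathbb E_\alpha$ and the split coordinate projection $Q\to Q/Q_\alpha$, whereas the paper propagates it inductively, using $\lambda$-purity of $A$ at successors and a colimit at limits, so your version shows purity is not needed for that part.

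You should also state explicitly that $\mathbb E_{\alpha+1}$ is $\mathcal E_{\mathcal S}$-exact, not merely exact. One way is to apply $\operatorname{Hom}_R(S,-)$ to the $3\times3$ diagram, whose columns $0\to M_\alpha\to M_{\alpha+1}\to A\to0$ and $0\to Q_\alpha\to Q_{\alpha+1}\to Q_E\to0$ you do show are relative conflations; alternatively, use the direct lifting argument the paper uses.
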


\begin{proof}
Enumerate $M$ as $(m_\alpha)_{\alpha<\tau}$, and start with
$M_0=Q_0=0$. At each stage we preserve the relative conflations
$\mathbb E_\alpha$ and $\mathbb E/\mathbb E_\alpha$, as well as
$0\to M_\alpha\to M\to M/M_\alpha\to0$. Given
$\mathbb E_\alpha$, apply \cref{relativepurificationS} to
$\mathbb E/\mathbb E_\alpha$ and the singleton
$\{m_\alpha+M_\alpha\}$. Write $A\subseteq M/M_\alpha$ for its
small end term and $Q'$ for its coordinate middle term. Define
$M_{\alpha+1}$ as the inverse image of $A$ in $M$, and
$Q_{\alpha+1}$ as the inverse image of $Q'$ in $Q$.
Then $Q_{\alpha+1}$ is a coordinate subsum, and the induced
periodic row is exact by the $3\times3$ lemma.
To check its relative exactness, map $S$ to $M_{\alpha+1}$,
project to $A$, and lift to $Q'$. Lift further through the split
epimorphism $Q_{\alpha+1}\to Q'$. The error maps to $M_\alpha$
and lifts to $Q_\alpha$ by induction.
The quotient periodic row is the quotient supplied by the
purification lemma.

The sequence $0\to M_\alpha\to M_{\alpha+1}\to A\to0$ is the
pullback of $0\to M_\alpha\to M\to M/M_\alpha\to0$.
Moreover, a map $S\to M/M_{\alpha+1}$ lifts first to
$M/M_\alpha$, by purity of $A$, and then to $M$ by induction.
Thus $M_\alpha\to M_{\alpha+1}$ and $M_{\alpha+1}\to M$ are relative inflations, and
$m_\alpha\in M_{\alpha+1}$.

At a limit ordinal $\beta$, take unions of the subobjects.
Their periodic rows and the quotient periodic rows remain
$\mathcal E_{\mathcal S}$-conflations by preservation of filtered
colimits, proved in \cref{Sbalancedpair}. Taking the colimit of
$0\to M_\alpha\to M\to M/M_\alpha\to0$ also proves that
$M_\beta\to M$ is a relative inflation. This completes the
recursion. Every element is included, so $M_\tau=M$; exactness
of the final quotient row also gives $Q_\tau=Q$.
\end{proof}

\subsection{Two completeness criteria}

The preceding lemmas give filtrations by small periodic modules.
To obtain filtrations suitable for $G(\X_{\mathcal S})$, we must
also control orthogonality to the relative projectives. We consider
two sufficient hypotheses:
\begin{align*}
(\mathrm{PI}_{\mathcal S})\quad &
\text{there is an infinite regular $\lambda$ such that every
$X\in\X_{\mathcal S}$ is $\lambda$-pure-injective};\\
(\mathrm{PO}_{\mathcal S})\quad &
\operatorname{Per}_{\mathcal S}(\X_{\mathcal S})
\subseteq{}^{\perp *}\X_{\mathcal S}.
\end{align*}
Here $\lambda$-pure-injectivity refers to ordinary module-theoretic
$\lambda$-purity: maps into the module extend across every
$\lambda$-pure inclusion.
Under $(\mathrm{PO}_{\mathcal S})$, every periodic module already
has the required orthogonality. Under $(\mathrm{PI}_{\mathcal S})$,
we preserve it along the filtration, using purity at successor stages
and the periodic conflations at limit stages.

\begin{theor}\label{Scompletenessmain}
If either $(\mathrm{PI}_{\mathcal S})$ or
$(\mathrm{PO}_{\mathcal S})$ holds, then
\[
\bigl(G(\X_{\mathcal S}),G(\X_{\mathcal S})^{\perp *}\bigr)
\]
is a functorially complete hereditary $*$-cotorsion pair.
More precisely, for a suitable infinite $\mu$, if $\mathcal C_\mu$
is a representative set of strongly Gorenstein $\X_{\mathcal S}$-modules
of cardinality at most $\mu$, then
\begin{equation}\label{eq:Smainfiltration}
\begin{gathered}
G(\X_{\mathcal S})=
\operatorname{Retr}\bigl(
\operatorname{Filt}_{\mathcal E_{\mathcal S}}(\mathcal C_\mu)\bigr),\\
G(\X_{\mathcal S})^{\perp *}=\mathcal C_\mu^{\perp *}.
\end{gathered}
\end{equation}
\end{theor}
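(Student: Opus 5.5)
We will prove the two identities in \eqref{eq:Smainfiltration} and then deduce functorial completeness. We use \cite[Theorem~5.16]{StovicekExactModels2014} in the efficient exact category $(R\text{-}\mathrm{Mod},\mathcal E_{\mathcal S})$ of \cref{Sbalancedpair}. Fix $\lambda$: under $(\mathrm{PI}_{\mathcal S})$ it is the cardinal given there, and under $(\mathrm{PO}_{\mathcal S})$ it is $\aleph_0$. Choose $\mu$ satisfying \eqref{eq:puritybound}. Put $\mathcal C=\mathcal C_\mu\cup\{G_0\}$; since $G_0$ is strongly Gorenstein of cardinality at most $\mu$, we may assume $G_0\in\mathcal C_\mu$. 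Then $\mathcal C_\mu$ is a set whose transfinite extension closure contains a generator. Hence the cited theorem gives a functorially complete cotorsion pair $\bigl({}^{\perp*}(\mathcal C_\mu^{\perp*}),\mathcal C_\mu^{\perp*}\bigr)$, and its left class is $\operatorname{Retr}(\operatorname{Filt}_{\mathcal E_{\mathcal S}}(\mathcal C_\mu))$. So it suffices to show
\[
G(\X_{\mathcal S})=\operatorname{Retr}\bigl(\operatorname{Filt}_{\mathcal E_{\mathcal S}}(\mathcal C_\mu)\bigr).
\]
The equality of right orthogonals then follows by taking $\perp*$. Heredity is already given by \cref{Shereditarypair}.

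\emph{Inclusion $\supseteq$.} By \cref{Speriodicnormalform}, $\mathcal C_\mu\subseteq G(\X_{\mathcal S})$. By \cref{Shereditarypair}, $G(\X_{\mathcal S})$ is the left class of a $*$-cotorsion pair. By the relative Eklof lemma, which holds in efficient exact categories, it is therefore closed under $\mathcal E_{\mathcal S}$-filtrations and direct summands.

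\emph{Inclusion $\subseteq$.} By \cref{Speriodicnormalform}, any $G\in G(\X_{\mathcal S})$ is a summand of a strongly Gorenstein $M$ with a conflation $\mathbb E$ of the form \eqref{eq:Snormalform}. Apply \cref{Speriodicfiltration} to get the filtration $(M_\alpha)$. The factors $F_\alpha=M_{\alpha+1}/M_\alpha$ are periodic of size at most $\mu$, and the inclusions are relative inflations. It remains to show each $F_\alpha\in{}^{\perp*}\X_{\mathcal S}$, since then $F_\alpha$ is isomorphic to a member of $\mathcal C_\mu$.
\begin{itemize}
\item Under $(\mathrm{PO}_{\mathcal S})$ this is immediate.
\item Under $(\mathrm{PI}_{\mathcal S})$ we prove by induction on $\alpha$ that $M/M_\alpha\in{}^{\perp*}\X_{\mathcal S}$. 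The case $\alpha=0$ is the hypothesis on $M$.
\end{itemize}

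\emph{Successor step.} Here $F_\alpha$ is $\lambda$-pure in $M/M_\alpha$. Given $X\in\X_{\mathcal S}$, $\lambda$-pure-injectivity of $X$ extends maps $F_\alpha\to X$ to $M/M_\alpha$. Combined with the relative conflation $0\to F_\alpha\to M/M_\alpha\to M/M_{\alpha+1}\to0$, this gives $\operatorname{Ext}^1_*(M/M_{\alpha+1},X)\hookrightarrow\operatorname{Ext}^1_*(M/M_\alpha,X)=0$. For $F_\alpha$ itself, we use its periodic conflation $0\to F_\alpha\to Q'\to F_\alpha\to0$ to reduce $\operatorname{Ext}^1_*(F_\alpha,X)=0$ to surjectivity of $\operatorname{Hom}(Q',X)\to\operatorname{Hom}(F_\alpha,X)$. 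Given $f:F_\alpha\to X$, extend it to $M/M_\alpha$. Then, using the quotient periodic row $\mathbb E/\mathbb E_\alpha$ with middle term a relative projective and left end $M/M_\alpha$, orthogonality of $M/M_\alpha$ extends it to $Q/Q_\alpha$. Finally restrict to the coordinate subsum $Q'$, which is compatible by the commutative diagram of \cref{relativepurificationS}.

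\emph{Limit step (main obstacle).} We must show $M/M_\beta\in{}^{\perp*}\X_{\mathcal S}$ at a limit $\beta$. Equivalently, every map $M_\beta\to X$ extends to $M$, or, via the periodic rows, every map $M/M_\beta\to X$ extends to $Q/Q_\beta$. I would try the following. The sequence $0\to M_\beta\to Q_\beta\to M_\beta\to0$ is a colimit of the $\mathbb E_\alpha$. A map $M_\beta\to X$ extends to $Q_\beta$ if compatible extensions on the $Q_\alpha$ can be chosen. Such choices can be made because at each successor step the relevant quotients $Q_{\alpha+1}/Q_\alpha$ are split coordinate summands and the factor $F_\alpha$ is orthogonal. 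One then passes from $Q_\beta$ back to $M$ using the periodicity (dimension shift by two positions along the periodic complex). The delicate point is arranging these extensions coherently, so I expect the limit case under $(\mathrm{PI}_{\mathcal S})$ to be the hardest part of the argument.
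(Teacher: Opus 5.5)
The gap is in the limit step under $(\mathrm{PI}_{\mathcal S})$, which you leave unproved, and you have put the difficulty in the wrong place. The rest of your plan is sound: the reduction via \cite[Theorem~5.16]{StovicekExactModels2014}, the inclusion $\supseteq$, the case $(\mathrm{PO}_{\mathcal S})$, and your successor step all work.

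Your successor step takes a different route from the paper. You extend along the $\lambda$-pure inclusion, then to $Q/Q_\alpha$, then restrict to the coordinate subsum $Q'$. The paper instead obtains $F_\alpha\in G(\X_{\mathcal S})$ from heredity (\cref{Shereditarypair}), since $M/M_\alpha$ and $M/M_{\alpha+1}$ are strongly Gorenstein.

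At the limit, choosing coherent extensions over the $Q_\alpha$ is not delicate. It is just the exact Eklof lemma, which you already use for $\supseteq$, applied to $(M_\alpha)_{\alpha\le\beta}$; the factors $F_\alpha$ with $\alpha<\beta$ are orthogonal by earlier successor steps. This gives $\operatorname{Ext}^1_*(M_\beta,X)=0$, and hence, via $\mathbb E_\beta$, a map $h\colon Q_\beta\to X$ with $h\,i_\beta=f$. What is actually missing is the passage from $h$ to a map on $M$. The phrase ``dimension shift by two positions'' does not produce such a map. Nor can you reuse the successor argument, because the filtration lemma does not make $M_\beta$ $\lambda$-pure in $M$.

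The missing idea is the coordinate retraction. By \cref{Speriodicfiltration}(i), $Q_\beta$ is a coordinate subsum of $Q$, so there is a retraction $r_\beta\colon Q\to Q_\beta$. Moreover, $i|_{M_\beta}$ equals $i_\beta$ followed by the inclusion $Q_\beta\subseteq Q$. Hence $h\,r_\beta\,i\colon M\to X$ restricts to $f$ on $M_\beta$, so $\operatorname{Hom}_R(M,X)\to\operatorname{Hom}_R(M_\beta,X)$ is onto. The long exact sequence of $0\to M_\beta\to M\to M/M_\beta\to0$, together with $\operatorname{Ext}^1_*(M,X)=0$, then gives $\operatorname{Ext}^1_*(M/M_\beta,X)=0$, which is exactly what your induction needs. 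This is the same coordinate-restriction trick you already used for $Q'$ at successor stages.

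A genuine dimension shift also works if you write it out. The quotient row $\mathbb E/\mathbb E_\beta$ has middle term $Q/Q_\beta\in\X_{\mathcal S}$, so it gives $\operatorname{Ext}^1_*(M/M_\beta,X)\cong\operatorname{Ext}^2_*(M/M_\beta,X)$. In the same long exact sequence, $\operatorname{Ext}^2_*(M/M_\beta,X)$ sits between $\operatorname{Ext}^1_*(M_\beta,X)=0$ and $\operatorname{Ext}^2_*(M,X)\cong\operatorname{Ext}^1_*(M,X)=0$, so it vanishes.
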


\begin{proof}
Under $(\mathrm{PI}_{\mathcal S})$, use its cardinal $\lambda$;
under $(\mathrm{PO}_{\mathcal S})$, use $\lambda=\aleph_0$.
Choose $\mu$ satisfying \eqref{eq:puritybound}. Such cardinals
exist in ZFC: starting above the stated bounds, iterate
$\nu\mapsto\nu^{<\lambda}$ through a continuous chain of length
$\lambda$ and take its supremum. Regularity of $\lambda$ ensures
that the supremum is fixed by this operation.
By \cref{Speriodicnormalform}, it suffices to filter a strongly
Gorenstein module $M$ with middle term in normal form.
Choose the filtration supplied by \cref{Speriodicfiltration}.

Under $(\mathrm{PO}_{\mathcal S})$, the modules $M_\alpha$, $M/M_\alpha$, and
$M_{\alpha+1}/M_\alpha$ are all strongly Gorenstein, since each
is periodic. Under $(\mathrm{PI}_{\mathcal S})$, we prove the same
assertion by induction. It holds at $\alpha=0$.
At a successor step put $N=M/M_\alpha$ and
$A=M_{\alpha+1}/M_\alpha$. For $X\in\X_{\mathcal S}$, the sequence
$0\to A\to N\to N/A\to0$ is a relative conflation, and
\[
\operatorname{Hom}_R(N,X)\longrightarrow\operatorname{Hom}_R(A,X)
\longrightarrow\operatorname{Ext}^1_*(N/A,X)
\longrightarrow\operatorname{Ext}^1_*(N,X)
\]
is exact. The first map is surjective by $\lambda$-pure-injectivity,
and the last group is zero by induction. Thus $N/A$ is strongly
Gorenstein. Since $N,N/A\in G(\X_{\mathcal S})$,
\cref{Shereditarypair} gives $A\in G(\X_{\mathcal S})$.
In particular $A\in{}^{\perp *}\X_{\mathcal S}$, and its periodic
conflation makes it strongly Gorenstein. The relative extension
of $A$ by $M_\alpha$ has the same orthogonality, so
$M_{\alpha+1}$ is strongly Gorenstein as well.

At a limit ordinal $\beta$, the exact Eklof lemma
\cite[Proposition~5.7]{StovicekExactModels2014}, applied to the
previously constructed factors, gives
$\operatorname{Ext}^1_*(M_\beta,X)=0$ for every $X\in\X_{\mathcal S}$.
The periodic row makes $M_\beta$ strongly Gorenstein.
To prove the same orthogonality for $M/M_\beta$, let
$f:M_\beta\to X$. Applying $\operatorname{Hom}_R(-,X)$ to
\[
0\to M_\beta\xrightarrow{i_\beta}Q_\beta\to M_\beta\to0
\]
shows that $f$ extends to a map $h:Q_\beta\to X$.
Let $r_\beta:Q\to Q_\beta$ be the coordinate retraction, and let
$i:M\to Q$ be the original inclusion. Then $h r_\beta i:M\to X$
extends $f$, because $i|_{M_\beta}$ is the composite of $i_\beta$
with the inclusion $Q_\beta\to Q$. Hence
$\operatorname{Hom}_R(M,X)\to\operatorname{Hom}_R(M_\beta,X)$
is surjective. The relative long exact sequence for
$0\to M_\beta\to M\to M/M_\beta\to0$, together with
$\operatorname{Ext}^1_*(M,X)=0$, gives
$\operatorname{Ext}^1_*(M/M_\beta,X)=0$. Its periodic row makes
the quotient strongly Gorenstein. This argument does not require
$M_\beta\subseteq M$ to be $\lambda$-pure.

Thus every $G\in G(\X_{\mathcal S})$ is a retract of a
$\mathcal C_\mu$-filtered object. Conversely,
$\mathcal C_\mu\subseteq G(\X_{\mathcal S})
={}^{\perp *}(G(\X_{\mathcal S})^{\perp *})$ by
\cref{Shereditarypair}; the exact Eklof lemma and closure under
retracts give the first equality in \eqref{eq:Smainfiltration}.
Applying the same lemma to $\mathcal C_\mu^{\perp *}$ gives the
second. Since $|G_0|\leq\mu$ and relative projectives are strongly
Gorenstein, $\mathcal C_\mu$ contains a representative of $G_0$.
Efficiency from \cref{Sbalancedpair} and
\cite[Theorem~5.16]{StovicekExactModels2014} now yield functorial
completeness. Heredity is \cref{Shereditarypair}.
\end{proof}

\subsection{Consequences and bounded periodic tests}

\begin{corol}\label{SigmaPcomplete}
If $G_0$ is $\Sigma$-pure-injective, then the cotorsion pair in
\cref{Shereditarypair} is functorially complete.
In particular, this holds for $\X=\operatorname{Add}(P)$ and its
relative exact structure whenever $P$ is a finitely presented
$\Sigma$-pure-injective generator. It also holds for every finite
set $\mathcal S$ of finitely generated modules over an Artin algebra.
\end{corol}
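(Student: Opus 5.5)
The plan is to reduce the corollary to the hypothesis $(\mathrm{PI}_{\mathcal S})$ of \cref{Scompletenessmain}, which then gives functorial completeness directly. By definition, $G_0=\bigoplus_{S\in\mathcal S_0}S$ is $\Sigma$-pure-injective when every direct sum $G_0^{(I)}$ is pure-injective. I will check that every $X\in\X_{\mathcal S}$ is pure-injective, that is, $\aleph_0$-pure-injective. Then $(\mathrm{PI}_{\mathcal S})$ holds with $\lambda=\aleph_0$, because module-theoretic purity is the same as $\aleph_0$-purity.

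For the first step, take $X\in\X_{\mathcal S}=\operatorname{Add}(\mathcal S_0)$. Then $X$ is a direct summand of $\bigoplus_{i\in I}S_i$ with each $S_i\in\mathcal S_0$. Each $S_i$ is a direct summand of $G_0$, so this coproduct is a direct summand of $G_0^{(I)}$. The module $G_0^{(I)}$ is pure-injective by hypothesis. Pure-injectivity passes to direct summands, since maps defined on a pure submodule extend into the big module and can then be composed with the retraction. So $X$ is pure-injective. Any map from a pure submodule $A\subseteq B$ into $X$ therefore extends to $B$, which is exactly $\lambda$-pure-injectivity for $\lambda=\aleph_0$. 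The only point to watch is that the paper's $\lambda$-purity, with fewer than $\lambda$ equations in fewer than $\lambda$ unknowns, coincides with ordinary purity at $\lambda=\aleph_0$. This holds because purity is equivalent to solvability of finite systems.

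For the particular cases, first let $P$ be a finitely presented $\Sigma$-pure-injective generator and take $\mathcal S=\{P\}$. Since $P$ is a generator, $R$ is a direct summand of $P^n$ for some $n$. Hence $\operatorname{Add}(\mathcal S_0)=\operatorname{Add}(P)$. Also $G_0=P\oplus R$ is a summand of $P^{n+1}$, and $\Sigma$-pure-injectivity passes to finite sums and summands: $(P^{n+1})^{(I)}\cong P^{(I)}$ after reindexing when $I$ is infinite, and the finite case follows similarly. So the first part applies. Adjoining $R$ does not change $\mathcal E_{\mathcal S}$, so this is the relative exact structure determined by $\operatorname{Add}(P)$.

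Next let $R$ be an Artin algebra and $\mathcal S$ a finite set of finitely generated modules. Then each $S\in\mathcal S_0$ is finitely presented and of finite length over the commutative artinian base ring. Hence $G_0$, a finite sum, has finite length over that base. Modules of finite length over the centre (endofinite modules) are $\Sigma$-pure-injective, by the standard fact that a module of finite endolength is $\Sigma$-pure-injective; I will cite this, for instance from Crawley-Boevey's work on endofinite modules. The main, though mild, obstacle is citing this endofiniteness fact correctly. Everything else is the routine transfer of pure-injectivity to summands of coproducts described above.
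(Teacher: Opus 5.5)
Your proposal is correct and follows the paper's proof essentially step for step. You reduce to $(\mathrm{PI}_{\mathcal S})$ with $\lambda=\aleph_0$ via $\X_{\mathcal S}=\operatorname{Add}(G_0)$, use that $R$ splits off a coproduct of copies of $P$ in the generator case, and invoke endofiniteness of $G_0$ in the Artin algebra case. One small imprecision: your parenthetical identifies ``finite length over the centre'' with endofiniteness, which is not an equivalence. What you need, and what the paper states, is only the implication that the commutative artinian base ring acts through $\operatorname{End}_R(G_0)$, so finite length over it forces finite endolength.
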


\begin{proof}
Since $\X_{\mathcal S}=\operatorname{Add}(G_0)$, every member is
a direct summand of a coproduct of copies of $G_0$, hence is
pure-injective. Apply $(\mathrm{PI}_{\mathcal S})$ with
$\lambda=\aleph_0$.
For the single-generator assertion take $\mathcal S=\{P\}$;
$R\in\operatorname{Add}(P)$, since an epimorphism from a coproduct
of copies of $P$ to $R$ splits.
Finally, over an Artin algebra finite generation implies finite
presentation. If $\mathcal S$ is finite, then $G_0$ has finite
length over the commutative artinian coefficient ring. That ring
acts through $\operatorname{End}_R(G_0)$, so $G_0$ is endofinite.
Every endofinite module is $\Sigma$-pure-injective; see
\cite[Section~5]{Prest2008PureInjective}.
\end{proof}

\begin{corol}\label{Sfiniteidcomplete}
The following two conditions are equivalent:
\begin{enumerate}[(i)]
\item every $X\in\X_{\mathcal S}$ has finite
$\mathcal E_{\mathcal S}$-injective dimension;
\item $\sup\{\operatorname{id}_{\mathcal E_{\mathcal S}}X
\mid X\in\X_{\mathcal S}\}<\infty$.
\end{enumerate}
Either condition implies $(\mathrm{PO}_{\mathcal S})$ and hence
functorial completeness of the cotorsion pair in
\cref{Shereditarypair}.
\end{corol}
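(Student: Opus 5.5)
The plan is to prove the two implications between (i) and (ii) separately, and then to derive $(\mathrm{PO}_{\mathcal S})$ from (ii) by a dimension-shifting argument along periodic conflations.

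\emph{(i)$\Rightarrow$(ii).} The implication (ii)$\Rightarrow$(i) is trivial. For the converse, argue by contradiction. Suppose there are $X_n\in\X_{\mathcal S}$ with $\operatorname{id}_{\mathcal E_{\mathcal S}}X_n\geq n$ for every $n$. Since $\X_{\mathcal S}=\operatorname{Add}(G_0)$ is closed under coproducts, $X=\bigoplus_n X_n$ lies in $\X_{\mathcal S}$, and by (i) it has finite relative injective dimension $d$. Each $X_n$ is a direct summand of $X$. Relative Ext is additive in the second variable, so $\operatorname{Ext}^{k}_*(-,X_n)$ is a direct summand of $\operatorname{Ext}^{k}_*(-,X)$. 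Hence $\operatorname{id}_{\mathcal E_{\mathcal S}}X_n\leq d$ for all $n$, which is a contradiction. This only uses that the retract of an object of injective dimension $\leq d$ (detected by the vanishing of $\operatorname{Ext}^{d+1}_*(-,\,\cdot\,)$) again has injective dimension $\leq d$.

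\emph{(ii)$\Rightarrow(\mathrm{PO}_{\mathcal S})$.} Let $d$ be the uniform bound. Take $M\in\operatorname{Per}_{\mathcal S}(\X_{\mathcal S})$ with its conflation $0\to M\to Q\to M\to0$, $Q\in\X_{\mathcal S}$, and fix $X\in\X_{\mathcal S}$. Since $Q$ is relatively projective, the relative long exact sequence (Theorem~\ref{enochsjenda}, with $\operatorname{Ext}^1_{\mathcal E}=\operatorname{Ext}^1_*$) gives isomorphisms
\[
\operatorname{Ext}^{k}_*(M,X)\cong\operatorname{Ext}^{k+1}_*(M,X)\qquad(k\geq1).
\]
Iterating,
\[
\operatorname{Ext}^1_*(M,X)\cong\operatorname{Ext}^{d+1}_*(M,X)=0,
\]
and the last group vanishes because $\operatorname{id}_{\mathcal E_{\mathcal S}}X\leq d$. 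Thus $M\in{}^{\perp*}\X_{\mathcal S}$, which is $(\mathrm{PO}_{\mathcal S})$. Functorial completeness then follows directly from \cref{Scompletenessmain}.

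\emph{Main subtlety.} The only delicate point is in (i)$\Rightarrow$(ii): one must check that relative injective dimension is really detected by the vanishing of $\operatorname{Ext}^{d+1}_*(-,X)$ on all modules, and that this vanishing passes to summands. Both follow from the balanced-pair identification $\operatorname{Ext}_{\X}\cong\operatorname{Ext}_{\Y}$: compute with $\Y_{\mathcal S}$-coresolutions, apply dimension shifting in the second variable, and use that finite direct sums and summands of relative injectives are relative injective.
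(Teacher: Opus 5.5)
Your proposal is correct and follows essentially the same argument as the paper: the coproduct $\bigoplus_n X_n\in\X_{\mathcal S}$ rules out unbounded relative injective dimensions, and dimension shifting along the periodic conflation, whose middle term is relatively projective, gives $\operatorname{Ext}^1_*(M,X)\cong\operatorname{Ext}^{k}_*(M,X)=0$ for large $k$. As the paper's wording indicates, finiteness of $\operatorname{id}_{\mathcal E_{\mathcal S}}X$ for each individual $X$ already suffices for this last step, so you do not need the uniform bound $d$ there.
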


\begin{proof}
If (i) holds without a uniform bound, choose $X_n\in\X_{\mathcal S}$
with $\operatorname{id}_{\mathcal E_{\mathcal S}}X_n>n$.
Their coproduct belongs to $\X_{\mathcal S}$ and has injective
dimension at least that of each summand, contradicting (i).
The converse is immediate. For a periodic relative conflation
$0\to M\to Q\to M\to0$ with $Q\in\X_{\mathcal S}$, relative
projectivity of $Q$ gives
\[
\operatorname{Ext}^i_*(M,X)\cong
\operatorname{Ext}^{i+1}_*(M,X)\qquad(i\geq1).
\]
Finite relative injective dimension of $X$ forces these groups
to vanish. Thus $(\mathrm{PO}_{\mathcal S})$ holds.
\end{proof}

\begin{prop}\label{Sboundedperiodictests}
Fix an infinite $\mu\geq\max\{|R|,|\mathcal S|,\aleph_0\}$,
and let $\mathcal P_\mu$ be a representative set of the
$\mathcal S$-periodic modules of cardinality at most $\mu$. Then,
\[
\operatorname{Per}_{\mathcal S}(\X_{\mathcal S})
\subseteq\operatorname{Retr}\bigl(
\operatorname{Filt}_{\mathcal E_{\mathcal S}}(\mathcal P_\mu)\bigr),
\qquad
\operatorname{Per}_{\mathcal S}(\X_{\mathcal S})^{\perp *}
=\mathcal P_\mu^{\perp *}.
\]
Consequently, the $*$-cotorsion pair generated by
$\operatorname{Per}_{\mathcal S}(\X_{\mathcal S})$ is functorially
complete and hereditary. Moreover,
\[
(\mathrm{PO}_{\mathcal S})
\quad\Longleftrightarrow\quad
\mathcal P_\mu\subseteq{}^{\perp *}\X_{\mathcal S}.
\]
\end{prop}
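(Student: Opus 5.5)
Every $\mathcal S$-periodic module is first reduced to normal form. By \cref{Speriodicnormalform}, each $M\in\operatorname{Per}_{\mathcal S}(\X_{\mathcal S})$ is a direct summand of a periodic $N$ with a conflation of the form \eqref{eq:Snormalform}. Apply \cref{Speriodicfiltration} to this conflation with $\lambda=\aleph_0$. Condition \eqref{eq:puritybound} then reads $\mu=\mu^{<\aleph_0}\geq\max\{|R|,|\mathcal S|,\aleph_0\}$. It holds automatically for every infinite $\mu$ above these bounds, since $\mu^{<\aleph_0}=\mu$. The lemma supplies an $\mathcal E_{\mathcal S}$-filtration of $N$ whose successive factors $M_{\alpha+1}/M_\alpha$ are $\mathcal S$-periodic of cardinality at most $\mu$, so each factor is isomorphic to a member of $\mathcal P_\mu$. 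Hence $N\in\operatorname{Filt}_{\mathcal E_{\mathcal S}}(\mathcal P_\mu)$ and $M\in\operatorname{Retr}(\operatorname{Filt}_{\mathcal E_{\mathcal S}}(\mathcal P_\mu))$. Unlike \cref{Scompletenessmain}, no orthogonality to $\X_{\mathcal S}$ has to be tracked here, because only periodicity of the factors is needed. This removes the need for $(\mathrm{PI}_{\mathcal S})$ or $(\mathrm{PO}_{\mathcal S})$.

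For the equality of orthogonals, note that $\mathcal P_\mu\subseteq\operatorname{Per}_{\mathcal S}(\X_{\mathcal S})$ gives $\operatorname{Per}^{\perp *}\subseteq\mathcal P_\mu^{\perp *}$. Conversely, let $C\in\mathcal P_\mu^{\perp *}$. The exact Eklof lemma \cite[Proposition~5.7]{StovicekExactModels2014} gives $\operatorname{Ext}^1_*(F,C)=0$ for every $F\in\operatorname{Filt}_{\mathcal E_{\mathcal S}}(\mathcal P_\mu)$. Additivity of $\operatorname{Ext}^1_*$ passes this vanishing to retracts, so it holds on all of $\operatorname{Per}_{\mathcal S}(\X_{\mathcal S})$ by the first inclusion.

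For functorial completeness, the generated pair is $\bigl({}^{\perp *}(\mathcal P_\mu^{\perp *}),\mathcal P_\mu^{\perp *}\bigr)$, which is cogenerated by the set $\mathcal P_\mu$. Every relative projective $Q\in\X_{\mathcal S}$ is periodic via the split conflation $0\to Q\to Q\oplus Q\to Q\to0$. Since $|G_0|\leq\mu$, the set $\mathcal P_\mu$ contains a copy of the generator $G_0$. Efficiency from \cref{Sbalancedpair} together with \cite[Theorem~5.16]{StovicekExactModels2014} then yields functorial completeness, with left class $\operatorname{Retr}(\operatorname{Filt}_{\mathcal E_{\mathcal S}}(\mathcal P_\mu))$.

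Heredity is the step I expect to be the main obstacle. My first approach is to show that the right class is closed under cokernels of inflations by dimension shifting. For periodic $P$ with conflation $0\to P\to Q\to P\to0$, relative projectivity of $Q$ gives $\operatorname{Ext}^2_*(P,C)\cong\operatorname{Ext}^1_*(P,C)$. Hence $\operatorname{Ext}^2_*(P,C)=0$ for $C$ in the right class. Now take a conflation $0\to C'\to C\to C''\to0$ with $C',C$ in the right class. The long exact sequence gives $\operatorname{Ext}^1_*(P,C)\to\operatorname{Ext}^1_*(P,C'')\to\operatorname{Ext}^2_*(P,C')=0$ with the left term zero, so $\operatorname{Ext}^1_*(P,C'')=0$. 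In a complete cotorsion pair with enough projectives this closure property is equivalent to heredity, so this finishes the argument.

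Finally, for the equivalence: $(\mathrm{PO}_{\mathcal S})$ trivially implies $\mathcal P_\mu\subseteq{}^{\perp *}\X_{\mathcal S}$. Conversely, if $\mathcal P_\mu\subseteq{}^{\perp *}\X_{\mathcal S}$, then $\X_{\mathcal S}\subseteq\mathcal P_\mu^{\perp *}=\operatorname{Per}_{\mathcal S}(\X_{\mathcal S})^{\perp *}$, which is exactly $(\mathrm{PO}_{\mathcal S})$.
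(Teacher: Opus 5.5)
Your proposal is correct and follows essentially the same route as the paper: normal form plus \cref{Speriodicfiltration} with $\lambda=\aleph_0$ for the inclusion, the exact Eklof lemma for the orthogonals, \cite[Theorem~5.16]{StovicekExactModels2014} with $G_0\in\mathcal P_\mu$ for functorial completeness, dimension shifting along a periodic conflation to show the right class is coresolving, and the same orthogonality argument for the final equivalence. One step needs a corrected justification: passing from a coresolving right class to a resolving left class does not follow from having enough projectives, since that hypothesis only gives the converse implication. It does follow from the enough relative injectives supplied by \cref{Sbalancedpair}, which is how the paper argues, or from completeness via the standard Yoneda-$\operatorname{Ext}^2$ argument.
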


\begin{proof}
Take $\lambda=\aleph_0$ in \cref{Speriodicfiltration} and use the
periodic normal form in \cref{Speriodicnormalform}.
This proves the first inclusion. Since
$\mathcal P_\mu\subseteq\operatorname{Per}_{\mathcal S}(\X_{\mathcal S})$,
the exact Eklof lemma and closure of left orthogonals under
retracts give equality of the right orthogonals.
Also $G_0\in\operatorname{Per}_{\mathcal S}(\X_{\mathcal S})$ and
$|G_0|\leq\mu$, so $\mathcal P_\mu$ contains a representative of
the exact-category generator. Apply
\cite[Theorem~5.16]{StovicekExactModels2014} and
\cref{Sbalancedpair} for functorial completeness.
For heredity, put
$\mathcal B=\operatorname{Per}_{\mathcal S}(\X_{\mathcal S})^{\perp *}$.
A periodic conflation gives, by dimension shifting,
$\operatorname{Ext}^n_*(M,B)\cong\operatorname{Ext}^1_*(M,B)=0$
for every periodic $M$, $B\in\mathcal B$, and $n\geq1$.
The long exact sequence in the second variable therefore shows that
$\mathcal B$ is coresolving. There are enough relative injectives by
\cref{Sbalancedpair}; dimension shifting along an injective
coresolution of $B$ now gives
$\operatorname{Ext}^n_*(L,B)=0$ for all $L\in{}^{\perp *}\mathcal B$
and $n\geq1$. Thus the cotorsion pair is hereditary.
Finally, if $\mathcal P_\mu\subseteq{}^{\perp *}\X_{\mathcal S}$,
the first inclusion and the exact Eklof lemma imply that every
periodic module lies in ${}^{\perp *}\X_{\mathcal S}$.
The reverse implication is immediate.
\end{proof}

\begin{remark}\label{Spoacyclic}
Condition $(\mathrm{PO}_{\mathcal S})$ is also equivalent to saying
that every right $\X_{\mathcal S}$-acyclic complex with terms in
$\X_{\mathcal S}$ is left $\X_{\mathcal S}$-acyclic.
Indeed, sum its cycle conflations to obtain a periodic module.
Under $(\mathrm{PO}_{\mathcal S})$, each cycle belongs to
${}^{\perp *}\X_{\mathcal S}$ as a direct summand of that module;
applying $\operatorname{Hom}_R(-,X)$ to the cycle conflations proves
left acyclicity. Conversely, splice any periodic conflation and
apply the asserted left acyclicity to obtain its orthogonality.
\end{remark}

\subsection{A relative cotorsion pair over a Dedekind domain}
\label{subsec:Dedekind}

We now compute the relative Gorenstein objects for a family of finitely
presented tests over a Dedekind domain. The computation uses the hereditary property of Dedekind domains
and the theory of quasi-Frobenius rings; see \cite[Sections~2C, 2E and Chapter~6]{Lam1999ModulesRings}.
In particular, the fact that every module over a quasi-Frobenius ring
is Gorenstein projective is standard; see also
\cite[Theorem~2.2]{BennisMahdouOuarghi2010}.
We identify the resulting classes and approximations in the exact structure $\mathcal E_{\mathcal S}$ on $R\text{-}\mathrm{Mod}$.

Throughout this subsection, $R$ is a commutative Dedekind domain,
$\Lambda$ is a set of nonzero prime ideals, and
$n_{\mathfrak p}\geq1$ is an integer for each $\mathfrak p\in\Lambda$.
Set
\[
A_{\mathfrak p}=R/\mathfrak p^{\,n_{\mathfrak p}},\qquad
\mathcal S=\{A_{\mathfrak p}:\mathfrak p\in\Lambda\},\qquad
\X=\X_{\mathcal S}.
\]
For an ideal $I$ and an $R$-module $N$, write
$N[I]=\{x\in N:Ix=0\}$.
The set $\Lambda$ may be infinite, and all module classes below consist
of arbitrary modules. The balanced pair and relative extension groups
are those of \cref{Sbalancedpair}.

\begin{lem}\label{Dedekindstructure}
Every submodule of an object of $\X$ is isomorphic to
\[
P\oplus\bigoplus_{\mathfrak p\in\Lambda}T_{\mathfrak p},
\qquad P\in\operatorname{Proj}R,\quad
T_{\mathfrak p}\in A_{\mathfrak p}\text{-}\mathrm{Mod}.
\]
Moreover, $X\in\X$ if and only if it has such a decomposition with
each $T_{\mathfrak p}$ projective over $A_{\mathfrak p}$.
Each $A_{\mathfrak p}$ is quasi-Frobenius, so its projective and
injective modules coincide.
\end{lem}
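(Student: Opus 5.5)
The plan is to first describe the objects of $\X$, then use heredity of $R$ to reduce submodules to a torsion-free part and a torsion part, and to deal with the quasi-Frobenius claim at the outset.

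\textbf{The quasi-Frobenius claim.} Each $A_{\mathfrak p}=R/\mathfrak p^{n_{\mathfrak p}}$ is isomorphic to $R_{\mathfrak p}/\mathfrak p^{n_{\mathfrak p}}R_{\mathfrak p}$. This is a quotient of a discrete valuation ring by a nonzero ideal, so it is a local artinian principal ideal ring with simple socle $\mathfrak p^{n_{\mathfrak p}-1}/\mathfrak p^{n_{\mathfrak p}}$. It is therefore self-injective, so quasi-Frobenius, and projective and injective modules coincide by the cited theory in \cite{Lam1999ModulesRings}.

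\textbf{Description of $\X$.} Objects of $\X=\operatorname{Add}(\mathcal S_0)$ are direct summands of $F\oplus\bigoplus_{\mathfrak p}A_{\mathfrak p}^{(I_{\mathfrak p})}$ with $F$ free. Given such a summand $X$, write the ambient module as $F\oplus T$, where $T=\bigoplus_{\mathfrak p}T'_{\mathfrak p}$ is its torsion part and $T'_{\mathfrak p}$ is the $\mathfrak p$-primary component. Then $X\cap T=t(X)$ is the torsion submodule of $X$, and it is a summand of $T$ because torsion parts of summands are summands.

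Next I decompose $t(X)$ by primes. Since $T$ is bounded on each $\mathfrak p$-component, $t(X)=\bigoplus_{\mathfrak p}t(X)_{\mathfrak p}$, and each $t(X)_{\mathfrak p}$ is a summand of $A_{\mathfrak p}^{(I_{\mathfrak p})}$. Hence each $t(X)_{\mathfrak p}$ is a projective $A_{\mathfrak p}$-module.

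Then I split off the torsion part. Let $\pi$ denote the projection $F\oplus T\to F$. The quotient $X/t(X)$ embeds via $\pi$ into $F$, so it is projective over the hereditary ring $R$. Hence $X\cong P\oplus t(X)$ with $P$ projective.

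For the converse, $P\in\operatorname{Add}(R)$ and projective $A_{\mathfrak p}$-modules lie in $\operatorname{Add}(A_{\mathfrak p})$, so the coproduct of these lies in $\X$.

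\textbf{Submodules.} Let $U\subseteq X$. Its torsion part $t(U)\subseteq t(X)$ decomposes into primary components $t(U)_{\mathfrak p}\subseteq t(X)_{\mathfrak p}$. Each such component is annihilated by $\mathfrak p^{n_{\mathfrak p}}$, so it is an $A_{\mathfrak p}$-module. The quotient $U/t(U)$ embeds in $X/t(X)\cong P$, so it is projective by heredity, and the sequence $0\to t(U)\to U\to U/t(U)\to0$ splits. This gives the displayed decomposition.

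\textbf{Main obstacle.} The step needing most care is the primary decomposition when $\Lambda$ is infinite. A torsion module over a Dedekind domain is the direct sum of its $\mathfrak p$-primary components over all primes. Here only primes in $\Lambda$ occur, since the primary components of $T$ outside $\Lambda$ vanish.

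I must also check that $t(X)$ is a summand of $T$. This holds because the idempotent $e$ cutting out $X$ preserves torsion, so $e|_T$ is an idempotent on $T$ whose image is $t(X)$. Similarly, $e$ preserves primary components, so each $t(X)_{\mathfrak p}$ is the image of an idempotent on $A_{\mathfrak p}^{(I_{\mathfrak p})}$, hence projective over $A_{\mathfrak p}$.
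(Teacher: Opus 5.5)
Your proposal is correct and follows essentially the paper's route. You split off the torsion part because the torsion-free quotient embeds in a free module over the hereditary ring $R$, decompose the torsion part into its $\mathfrak p$-primary components, and note that the idempotent (retraction) cutting out $X$ restricts to each $A_{\mathfrak p}^{(I_{\mathfrak p})}$, which gives $A_{\mathfrak p}$-projectivity. The remaining differences are cosmetic: the paper treats an arbitrary submodule of the ambient sum at once, proves the primary splitting elementwise via the Chinese remainder theorem, and checks self-injectivity of $A_{\mathfrak p}$ by Baer's criterion rather than by the simple-socle characterization you invoke.
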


\begin{proof}
An object of $\X$ is a retract of a module
\[
D=R^{(I)}\oplus\bigoplus_{\mathfrak p\in\Lambda}
A_{\mathfrak p}^{(I_{\mathfrak p})}.
\]
Let $M\subseteq D$. Its ordinary torsion submodule $t(M)$ is
$M\cap t(D)$. The Chinese remainder theorem, applied to the finitely
many components supporting any one element, shows that
\[
t(M)=\bigoplus_{\mathfrak p\in\Lambda}T_{\mathfrak p},
\qquad T_{\mathfrak p}=M\cap A_{\mathfrak p}^{(I_{\mathfrak p})}.
\]
Indeed, each primary component of an element of $t(M)$ is a scalar
multiple of that element and therefore belongs to $M$.
Projection to $R^{(I)}$ identifies $M/t(M)$ with a submodule of a
free module. Since $R$ is hereditary, this quotient is projective;
thus $0\to t(M)\to M\to M/t(M)\to0$ splits.

If $M=X$ is a retract of $D$, a retraction $D\to X$ restricts on
each primary component to a retraction
$A_{\mathfrak p}^{(I_{\mathfrak p})}\to T_{\mathfrak p}$.
Hence each $T_{\mathfrak p}$ is $A_{\mathfrak p}$-projective.
The converse follows by taking direct sums of retracts of free
$R$-modules and free $A_{\mathfrak p}$-modules.

Finally,
$A_{\mathfrak p}\cong R_{\mathfrak p}/(\pi^{n_{\mathfrak p}})$,
where $R_{\mathfrak p}$ is a discrete valuation ring with uniformizer
$\pi$. This is an artinian principal ideal local ring. It is
self-injective by Baer's criterion: a map from the ideal $(\pi^j)$
to $A_{\mathfrak p}$ sends $\pi^j$ to an element annihilated by
$\pi^{n_{\mathfrak p}-j}$, hence to $\pi^j b$ for some
$b\in A_{\mathfrak p}$, and therefore extends by multiplication by
$b$. Thus $A_{\mathfrak p}$ is quasi-Frobenius. The coincidence of
projective and injective modules, including infinitely generated ones,
is the standard quasi-Frobenius characterization
\cite[Chapter~6]{Lam1999ModulesRings}.
\end{proof}

\begin{theor}\label{Dedekindpair}
In the above setup, one has
\begin{align}
G(\X)
&=\left\{P\oplus\bigoplus_{\mathfrak p\in\Lambda}T_{\mathfrak p}:
P\in\operatorname{Proj}R,\quad
T_{\mathfrak p}\in A_{\mathfrak p}\text{-}\mathrm{Mod}\right\},
\label{eq:DedekindG}\\
G(\X)^{\perp *}
&=\left\{N:
N[\mathfrak p^{\,n_{\mathfrak p}}]\in
\operatorname{Inj}(A_{\mathfrak p}\text{-}\mathrm{Mod})
\text{ for all }\mathfrak p\in\Lambda\right\}.
\label{eq:DedekindB}
\end{align}
More precisely, for $T\in A_{\mathfrak p}\text{-}\mathrm{Mod}$ and
$N\in R\text{-}\mathrm{Mod}$ there are natural isomorphisms
\begin{equation}\label{eq:DedekindExt}
\operatorname{Ext}^i_*(T,N)
\cong\operatorname{Ext}^i_{A_{\mathfrak p}}
\bigl(T,N[\mathfrak p^{\,n_{\mathfrak p}}]\bigr),\qquad i\geq1.
\end{equation}
Condition $(\mathrm{PO}_{\mathcal S})$ holds, and
$\bigl(G(\X),G(\X)^{\perp *}\bigr)$ is a functorially complete
hereditary $*$-cotorsion pair.
\end{theor}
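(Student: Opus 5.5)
The natural order is: compute relative Ext on $A_{\mathfrak p}$-modules, read off both classes, then deduce completeness from \cref{Sfiniteidcomplete} or \cref{Scompletenessmain}.

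\textbf{Step 1: the isomorphism \eqref{eq:DedekindExt}.} Let $T$ be an $A_{\mathfrak p}$-module. Take a projective resolution $F_\bullet\to T$ over $A_{\mathfrak p}$; its terms are sums of copies of $A_{\mathfrak p}$, hence lie in $\X$. I claim it is right $\X$-acyclic, so it computes $\operatorname{Ext}^i_*(T,-)$. By \cref{Dedekindstructure}, it suffices to test $\operatorname{Hom}_R(X,-)$ for $X=R$, $X=A_{\mathfrak q}$ with $\mathfrak q\neq\mathfrak p$, and $X=A_{\mathfrak p}$:
\begin{enumerate}[(i)]
\item for $X=R$ and $X=A_{\mathfrak p}$, the functor $\operatorname{Hom}_R(X,F_\bullet)$ is the underlying complex (the $A_{\mathfrak p}$-modules are killed by $\mathfrak p^{n_{\mathfrak p}}$), which is exact;
\item for $X=A_{\mathfrak q}$, $\mathfrak q\ne\mathfrak p$, the Hom groups vanish by coprimality.
\end{enumerate}
Then
\[
\operatorname{Hom}_R(F_j,N)=\operatorname{Hom}_{A_{\mathfrak p}}\bigl(F_j,N[\mathfrak p^{n_{\mathfrak p}}]\bigr),
\]
which gives \eqref{eq:DedekindExt}. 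For $P\in\operatorname{Proj}R\subseteq\X$ the relative Ext vanishes.

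\textbf{Step 2: the classes.} For $\supseteq$ in \eqref{eq:DedekindG}, $P\in\X\subseteq G(\X)$. For an $A_{\mathfrak p}$-module $T$, since $A_{\mathfrak p}$ is quasi-Frobenius, $T$ has a totally acyclic complete resolution by projective $=$ injective $A_{\mathfrak p}$-modules. Right $\X$-acyclicity follows as in Step 1. For left acyclicity, $\operatorname{Hom}_R(-,R)$ vanishes on torsion modules, and $\operatorname{Hom}_R(-,A_{\mathfrak q})$ vanishes for $\mathfrak q\ne\mathfrak p$. The functor $\operatorname{Hom}_R(-,A_{\mathfrak p})=\operatorname{Hom}_{A_{\mathfrak p}}(-,A_{\mathfrak p})$ is exact because $A_{\mathfrak p}$ is self-injective.

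Direct sums of cycles of such complexes are cycles of the sum complex. Sums stay right acyclic because the tests are finitely presented. For left acyclicity, $\operatorname{Hom}(\bigoplus,X)=\prod\operatorname{Hom}$, and a product of exact sequences is exact. So $G(\X)$ contains the right-hand side of \eqref{eq:DedekindG}. Conversely, $G(\X)$-objects are submodules of objects of $\X$, so \cref{Dedekindstructure} gives the reverse inclusion.

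Given \eqref{eq:DedekindG}, $N\in G(\X)^{\perp*}$ if and only if $\operatorname{Ext}^1_{A_{\mathfrak p}}(T,N[\mathfrak p^{n_{\mathfrak p}}])=0$ for all $A_{\mathfrak p}$-modules $T$ and all $\mathfrak p\in\Lambda$. By Step 1, and since relative Ext commutes with sums in the first variable as a product, this is equivalent to each $N[\mathfrak p^{n_{\mathfrak p}}]$ being $A_{\mathfrak p}$-injective. That is \eqref{eq:DedekindB}.

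\textbf{Step 3: $(\mathrm{PO}_{\mathcal S})$ and completeness.} The cleanest route is \cref{Sfiniteidcomplete}: show each $X\in\X$ has relative injective dimension $0$, i.e.\ $X\in\mathcal E_{\mathcal S}$-injective. This is not true for $X=R$, so instead I verify $(\mathrm{PO}_{\mathcal S})$ directly. Every periodic module $M$ is a submodule of $Q\in\X$, so by \cref{Dedekindstructure} it has the form $P\oplus\bigoplus T_{\mathfrak p}$. Hence $M$ lies in the class \eqref{eq:DedekindG}.

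I must check that this class lies in ${}^{\perp*}\X$. The object $X=R$ and objects of the form $P$ are handled by Step 1 together with the observation that $R[\mathfrak p^{n}]=0$, which is injective. Torsion summands of $X$ are $A_{\mathfrak p}$-injective by quasi-Frobenius, so $X[\mathfrak p^{n_{\mathfrak p}}]$ is an injective $A_{\mathfrak p}$-module for every $X\in\X$. Thus $\X\subseteq G(\X)^{\perp*}$, which anyway follows from the earlier remark. Then \cref{Scompletenessmain} gives functorial completeness, and heredity comes from \cref{Shereditarypair}.

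\textbf{Main obstacle.} The main obstacle is Step 1's right acyclicity, together with the compatibility of $\operatorname{Ext}_*$ with infinite sums in the first variable when reducing to the primary components $T_{\mathfrak p}$.
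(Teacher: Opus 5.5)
Your proposal is correct and follows essentially the same route as the paper: complete resolutions over the quasi-Frobenius rings $A_{\mathfrak p}$ tested against $R$ and the $A_{\mathfrak q}$, direct sums of these resolutions, \cref{Dedekindstructure} for the reverse inclusion and for $(\mathrm{PO}_{\mathcal S})$ (periodic modules are submodules of objects of $\X$), and then \cref{Shereditarypair} and \cref{Scompletenessmain}. The one point to tighten is left $\X$-acyclicity: it must be checked against every $X\in\X$, not only $R$, $A_{\mathfrak q}$, $A_{\mathfrak p}$, because $\operatorname{Hom}_R(-,X)$ does not reduce to generators in the second variable; this follows as in the paper by writing $X=P\oplus\bigoplus_{\mathfrak q}Q_{\mathfrak q}$ and noting that every map from an $A_{\mathfrak p}$-module lands in the summand $Q_{\mathfrak p}$, which is $A_{\mathfrak p}$-injective.
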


\begin{proof}
Every object of $G(\X)$ embeds in an object of $\X$, so
\cref{Dedekindstructure} gives one inclusion in
\eqref{eq:DedekindG}. For the converse, fix $\mathfrak p\in\Lambda$
and $T\in A_{\mathfrak p}\text{-}\mathrm{Mod}$.
Splice a projective resolution of $T$ over $A_{\mathfrak p}$ with
an injective coresolution. By \cref{Dedekindstructure}, the latter
also has projective terms. The resulting exact complex
$C_{\mathfrak p}$ has $T$ as a cycle and has all its terms in $\X$.

For every $A_{\mathfrak p}$-module $V$,
\[
\operatorname{Hom}_R(A_{\mathfrak q},V)=0
\quad(\mathfrak q\neq\mathfrak p),\qquad
\operatorname{Hom}_R(A_{\mathfrak p},V)\cong V.
\]
The first equality follows from comaximality of the relevant ideal
powers. It follows that $C_{\mathfrak p}$ is right $\X$-acyclic:
exactness can be tested on $R$ and the $A_{\mathfrak q}$, and then
passes to coproducts and retracts of these tests by taking products
and retracts of the corresponding Hom complexes.

Let $X\in\X$. By \cref{Dedekindstructure}, write
$X=P\oplus\bigoplus_{\mathfrak q}Q_{\mathfrak q}$, where $P$ is
$R$-projective and $Q_{\mathfrak q}$ is $A_{\mathfrak q}$-projective.
Any map from an $A_{\mathfrak p}$-module to $X$ has image in
$Q_{\mathfrak p}$: $P$ is torsion-free and the other primary
components have no elements annihilated by
$\mathfrak p^{\,n_{\mathfrak p}}$. Hence
\[
\operatorname{Hom}_R(C_{\mathfrak p},X)
\cong\operatorname{Hom}_{A_{\mathfrak p}}
(C_{\mathfrak p},Q_{\mathfrak p})
\]
is exact, since $Q_{\mathfrak p}$ is injective over
$A_{\mathfrak p}$. Thus $T\in G(\X)$.

For a family $(T_{\mathfrak p})$, take the direct sum of these
complete resolutions. It is right $\X$-acyclic because the tests
are finitely presented and direct sums of exact module complexes
are exact. It is left $\X$-acyclic because applying
$\operatorname{Hom}_R(-,X)$ gives a product of exact complexes of
abelian groups, and products of abelian groups are exact.
Ordinary projectives already belong to $\X\subseteq G(\X)$.
This proves \eqref{eq:DedekindG}.

An ordinary projective resolution of $T$ over $A_{\mathfrak p}$ is,
by the same test calculation, an $\mathcal E_{\mathcal S}$-projective
resolution over $R$. On its terms the natural identification
\[
\operatorname{Hom}_R(V,N)
=\operatorname{Hom}_{A_{\mathfrak p}}
\bigl(V,N[\mathfrak p^{\,n_{\mathfrak p}}]\bigr)
\]
proves \eqref{eq:DedekindExt}. Taking direct sums of these relative
resolutions and applying $\operatorname{Hom}_R(-,N)$ also gives
\[
\operatorname{Ext}^1_*
\left(P\oplus\bigoplus_{\mathfrak p}T_{\mathfrak p},N\right)
\cong\prod_{\mathfrak p}
\operatorname{Ext}^1_{A_{\mathfrak p}}
\bigl(T_{\mathfrak p},N[\mathfrak p^{\,n_{\mathfrak p}}]\bigr).
\]
Here exactness of products justifies passage to cohomology.
Vanishing for all $A_{\mathfrak p}$-modules $T_{\mathfrak p}$ is
equivalent to injectivity of the indicated target, proving
\eqref{eq:DedekindB}.

Every $\mathcal S$-periodic module is a submodule of an object of
$\X$, hence belongs to $G(\X)$ by \cref{Dedekindstructure} and
\eqref{eq:DedekindG}. Thus it is left orthogonal to $\X$ and
$(\mathrm{PO}_{\mathcal S})$ holds. The hereditary cotorsion-pair
conclusion follows from \cref{Shereditarypair}, which invokes
Section~3 using the Kaplansky lemma. Functorial completeness follows
separately from \cref{Scompletenessmain}.
\end{proof}

\begin{prop}\label{Dedekindapproximations}
Let $\mathcal B=G(\X)^{\perp *}$ in the setup of
\cref{Dedekindpair}. Every $R$-module $M$ admits an
$\mathcal E_{\mathcal S}$-conflation
\begin{equation}\label{eq:Dedekindprecover}
0\longrightarrow P_1\longrightarrow
P_0\oplus T(M)\longrightarrow M\longrightarrow0,
\qquad
T(M)=\bigoplus_{\mathfrak p\in\Lambda}
M[\mathfrak p^{\,n_{\mathfrak p}}],
\end{equation}
where $P_0,P_1$ are ordinary projective $R$-modules. Its last map
is a special $G(\X)$-precover. There is also an
$\mathcal E_{\mathcal S}$-conflation
$0\to M\to B\to H\to0$ with $B\in\mathcal B$ and $H\in G(\X)$,
so its first map is a special $\mathcal B$-preenvelope.
In particular, every module has a proper $G(\X)$-resolution of
length at most one.
\end{prop}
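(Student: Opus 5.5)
The plan is to write down the precover explicitly, check it directly against the tests in $\mathcal S_0$, and then obtain the preenvelope from it by the standard Salce pullback argument inside the exact category $(R\text{-}\mathrm{Mod},\mathcal E_{\mathcal S})$.

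\emph{The precover.} Choose a free module $P_0$ with an epimorphism $\varepsilon:P_0\to M$; for instance $P_0=R^{(M)}$ with the evaluation map. Let $\iota:T(M)\to M$ be the sum of the inclusions $M[\mathfrak p^{\,n_{\mathfrak p}}]\subseteq M$. The map $\iota$ is injective: the annihilating ideals $\mathfrak p^{\,n_{\mathfrak p}}$ are pairwise comaximal, so the primary components are independent by the Chinese remainder argument already used in \cref{Dedekindstructure}.

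Put $\psi=(\varepsilon,\iota):P_0\oplus T(M)\to M$. It is surjective, and it is an $\mathcal E_{\mathcal S}$-deflation for two reasons.
\begin{enumerate}[(i)]
\item A map $R\to M$ lifts through $\varepsilon$.
\item A map $A_{\mathfrak p}\to M$ has image in $M[\mathfrak p^{\,n_{\mathfrak p}}]$, so it factors through the summand of $T(M)$ indexed by $\mathfrak p$.
\end{enumerate}
By \eqref{eq:DedekindG}, the middle term $P_0\oplus T(M)$ lies in $G(\X)$, since each $M[\mathfrak p^{\,n_{\mathfrak p}}]$ is an $A_{\mathfrak p}$-module.

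\emph{The kernel.} Let $K=\ker\psi$. Take an element $(x,t)\in K$ and consider the projection $K\to P_0$.
\begin{enumerate}[(i)]
\item If $(x,t)$ is torsion, then $x=0$ because $P_0$ is torsion-free. Injectivity of $\iota$ then forces $t=0$. Hence $K$ is torsion-free.
\item The kernel of the projection $K\to P_0$ is $K\cap T(M)$. This is zero, again by injectivity of $\iota$.
\end{enumerate}
So $K$ embeds in the free module $P_0$. Since $R$ is hereditary, $K=:P_1$ is projective.

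A projective module satisfies $P_1[\mathfrak p^{\,n_{\mathfrak p}}]=0$, which is trivially an injective $A_{\mathfrak p}$-module. Hence $P_1\in\mathcal B$ by \eqref{eq:DedekindB}. This shows that $\psi$ is a special $G(\X)$-precover.

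Since $P_1$ is projective, it also lies in $G(\X)$. Moreover, $\operatorname{Ext}^1_*(G,P_1)=0$ for every $G\in G(\X)$, so $\operatorname{Hom}_R(G,-)$ keeps the conflation exact. Thus $0\to P_1\to P_0\oplus T(M)\to M\to0$ is a proper $G(\X)$-resolution of length at most one.

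\emph{The preenvelope.} I use Salce's trick.
\begin{enumerate}[(i)]
\item Take a $\Y_{\mathcal S}$-preenvelope $0\to M\to Y\to C\to0$. It is an $\mathcal E_{\mathcal S}$-conflation by admissibility (\cref{Sbalancedpair}).
\item Apply the precover just constructed to $C$. This gives $0\to P_1'\to G\to C\to0$ with $G\in G(\X)$ and $P_1'\in\mathcal B$.
\item Pull this conflation back along $Y\to C$. Call the pullback $B$. This yields conflations $0\to P_1'\to B\to Y\to0$ and $0\to M\to B\to G\to0$; pullbacks of conflations are conflations in an exact category.
\end{enumerate}
Now $Y$ is relatively injective, so $Y\in\mathcal B$. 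Since $\mathcal B$ is closed under extensions, $B\in\mathcal B$. Taking $H=G$ gives the required special $\mathcal B$-preenvelope.

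\emph{Expected obstacle.} The only real care is needed in two places: the independence of the primary components, which gives both injectivity of $\iota$ and $K\cap T(M)=0$, and the verification that $\psi$ is relatively deflating rather than merely surjective. Everything else reduces to results already proved.
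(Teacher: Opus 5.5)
Your proof is correct and is essentially the paper's argument. Your map $(\varepsilon,\iota)\colon P_0\oplus T(M)\to M$ is exactly the split pullback $E=M\times_{M/T(M)}P_0$ that the paper forms for the presentation $P_0\to M\to M/T(M)$, so your kernel $K\cong\varepsilon^{-1}(T(M))$ is the paper's $P_1$. The preenvelope is the same Salce pullback of the precover of $C$ along a relative-injective conflation $0\to M\to Y\to C\to0$.

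One small point of attribution: that this $\Y_{\mathcal S}$-preenvelope sequence lies in $\mathcal E_{\mathcal S}$ uses balancedness, not admissibility alone. The sequence is left $\Y_{\mathcal S}$-acyclic, hence right $\X_{\mathcal S}$-acyclic. Admissibility only tells you that $M\to Y$ is monic.
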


\begin{proof}
Comaximality makes the natural map $T(M)\to M$ injective.
Choose an ordinary projective presentation
$0\to P_1\to P_0\to M/T(M)\to0$.
Its kernel $P_1$ is projective because $R$ is hereditary.
Form the pullback $E=M\times_{M/T(M)}P_0$. The exact sequence
$0\to T(M)\to E\to P_0\to0$ splits, giving
$E\cong P_0\oplus T(M)$. The other pullback sequence is
$0\to P_1\to E\to M\to0$.
Every map $A_{\mathfrak p}\to M$ has image in $T(M)$ and lifts
through $T(M)\to E$; the test $R$ lifts because $E\to M$ is epic.
Hence this sequence is an $\mathcal E_{\mathcal S}$-conflation.
By \cref{Dedekindpair}, $E\in G(\X)$ and $P_1\in\mathcal B$,
the latter because $P_1$ is torsion-free. Applying
$\operatorname{Hom}_R(G,-)$ for $G\in G(\X)$ and using
$\operatorname{Ext}^1_*(G,P_1)=0$ proves that $E\to M$ is a
special precover and that \eqref{eq:Dedekindprecover} is proper.

For the other approximation, choose an
$\mathcal E_{\mathcal S}$-conflation $0\to M\to Y\to C\to0$
with $Y$ relative injective, using \cref{Sbalancedpair}.
Apply the preceding construction to $C$, obtaining
$0\to P\to H\to C\to0$ with $P$ ordinary projective and
$H\in G(\X)$. The pullback $B=Y\times_C H$ gives conflations
\[
0\longrightarrow M\longrightarrow B\longrightarrow H\longrightarrow0,
\qquad
0\longrightarrow P\longrightarrow B\longrightarrow Y\longrightarrow0.
\]
Both $P$ and $Y$ belong to $\mathcal B$, and right Ext-orthogonals
are closed under relative extensions, so $B\in\mathcal B$.
The first sequence is the required special preenvelope, since
$\operatorname{Ext}^1_*(H,B')=0$ for every $B'\in\mathcal B$.
Functorial completeness was established in \cref{Dedekindpair};
the constructions above give explicit approximation conflations.
\end{proof}

\begin{ex}\label{Dedekindintegers}
Let $R=\mathbb Z$ and
$\mathcal S=\{\mathbb Z/p^2:p\text{ prime}\}$.
Then
\[
G(\X)=\left\{F\oplus\bigoplus_p T_p:
F\text{ free abelian},\quad p^2T_p=0\right\},
\]
and its right relative orthogonal consists of the groups $N$ for
which $N[p^2]$ is injective, equivalently projective, over
$\mathbb Z/p^2$ for every prime $p$.
In particular, $\mathbb Z/p\in G(\X)\setminus\X$: it is not
projective over $\mathbb Z/p^2$.
Also $\mathbb Q\notin G(\X)$, since it is torsion-free and not
free. Thus $\X\subsetneq G(\X)\subsetneq\mathbb Z\text{-}\mathrm{Mod}$.
The tests cannot all belong to the additive closure of a single
finitely presented abelian group: such a group has nonzero torsion
at only finitely many primes. The quasi-Frobenius behavior of the
individual quotients is standard; compare
\cite[Corollaries~3.9--3.10]{BennisMahdouOuarghi2010}.
\end{ex}

\noindent\textbf{Funding.} K.~Golfis was supported by the Hellenic Foundation for Research and Innovation (H.F.R.I.) under the ``3rd Call for H.F.R.I. Research Projects to Support Faculty Members and Researchers'', project number 24921.

\bigskip
\noindent\begin{minipage}{\textwidth}
\small
Konstantinos Golfis,\\
Department of Mathematics, University of Athens, Athens 15784, Greece\\
E-mail: \href{mailto:golfisk@math.uoa.gr}{\texttt{golfisk@math.uoa.gr}}
\end{minipage}
\end{document}